\theoremstyle{plain}
\newtheorem{thm}{Theorem}[section]
\newtheorem{lem}[thm]{Lemma}
\newtheorem{prop}[thm]{Proposition}
\newtheorem{cor}[thm]{Corollary}
\theoremstyle{definition}
\newtheorem{rem}[thm]{Remark}
\newcommand{\coloneq}{\mathrel{\mathop:}=}
\newcommand{\st}{\,;\,}
\newcommand{\integers}{\ensuremath{\mathbb{Z}}}
\newcommand{\reals}{\ensuremath{\mathbb{R}}}
\newcommand{\complex}{\ensuremath{\mathbb{C}}}
\newcommand{\rd}[1]{\ensuremath{\reals^{#1}}}
\newcommand{\sphere}{\ensuremath{S^2}}
\newcommand{\lspace}[2]{\ensuremath{L^{#1}\left(#2\right)}}
\newcommand{\ltwob}{\ensuremath{\lspace 2 {b, b+1}}}
\newcommand{\ltwo}{\ensuremath{\lspace 2 {0, \infty}}}
\newcommand{\sobspace}[2]{\ensuremath{H^{#1}\left(#2\right)}}
\newcommand{\honeb}{\ensuremath{\sobspace 1 {b,b+1}}}
\newcommand{\htwob}{\ensuremath{\sobspace 2 {b,b+1}}}
\newcommand{\czeroinf}[1]{\ensuremath{C_0^{\infty}\!\left(#1\right)}}
\newcommand{\cz}{\ensuremath{\czeroinf{0,\infty}}}
\newcommand{\czthree}{\ensuremath{\czeroinf{\rd 3 \! \setminus \!\! \{0\}}}}
\newcommand{\hilbert}{\ensuremath{\mathscr{H}}}
\newcommand{\spec}[1]{\ensuremath{\sigma\left(#1\right)}}
\newcommand{\dom}[1]{\ensuremath{D\left(#1\right)}}
\newcommand{\qdom}[1]{\ensuremath{D\left[#1\right]}}
\DeclareMathOperator{\realpart}{Re}
\DeclareMathOperator{\supp}{supp}
\DeclareMathOperator{\linspan}{span}
\DeclareMathOperator{\rank}{rank}
\DeclareMathOperator{\tr}{tr}
\newcommand{\utrans}{\ensuremath{\mathscr{U}}}
\newcommand{\xp}{\ensuremath{\nu}}
\newcommand{\gc}{\ensuremath{\gamma_{\mathrm{c}}}}
\newcommand{\hardyconst}[2]{\ensuremath{C_{#1,#2}^{\mathrm{HR}}}}
\numberwithin{equation}{section}
\title{Lieb-Thirring Inequalities for Fourth-Order Operators in Low Dimensions}
\author{
  Tomas Ekholm\\
  {\small Department of Mathematics}\\
  {\small Lund University}\\
  {\small S-221 00 Lund, Sweden}\\
  {\small \texttt{tomas.ekholm@math.lu.se}}\\
  \and
  Andreas Enblom\\
  {\small Department of Mathematics}\\
  {\small Royal Institute of Technology}\\
  {\small S-100 44 Stockholm, Sweden}\\
  {\small \texttt{enblom@math.kth.se}}
}
\begin{document}
\maketitle

\begin{abstract}
  This paper considers Lieb-Thirring inequalities for higher 
order differential operators.
A result for general fourth-order operators on the half-line is
developed, and the trace inequality
\begin{displaymath}
  \mathrm{tr}\left(
    (-\Delta)^2 - C^{\mathrm{HR}}_{d,2} \frac{1}{|x|^4} - V(x)
  \right)_-^{\gamma} 
  \leq C_\gamma \int_{\mathbb{R}^d} V(x)_+^{\gamma + \frac{d}{4}} \, dx,
  \quad \gamma \geq 1 - \frac d 4,
\end{displaymath}
where $C^{\mathrm{HR}}_{d,2}$ is the sharp constant in the
Hardy-Rellich inequality and where $C_\gamma > 0$ is independent of
$V$, is proved for dimensions $d = 1,3$. As a corollary of this
inequality a Sobolev-type inequality is obtained.

\end{abstract}

%
%
%

\section{Introduction}
This paper concerns Lieb-Thirring inequalities with critical
exponents. 
Well-known results in this area are the Lieb-Thirring inequalities 
\begin{equation*}
  \tr \left((-\Delta)^{l} - V\right)_-^{\gamma}
  \leq C \int_{\rd d} V(x)^{\gamma + \frac{d}{2l}} \, dx,
  \quad \gamma \geq 1 - \frac{d}{2l},
\end{equation*}
in the space $\lspace 2 {\rd d}$, where $l > d/2$, as discussed in
\cite{weidl} and \cite{netrusov-weidl}. 

Recent papers such as
\cite{efcmp} and \cite{frank-lieb-seiringer} combine Lieb-Thirring
inequalities with the sharp Hardy-Rellich inequalities of the type
\begin{equation} \label{eq:introhardy}
  \int_{\rd d} \left| \nabla^{l} u(x) \right|^2 \, dx \geq
  \hardyconst{d}{l} \int_{\rd d} \frac{|u(x)|^2}{|x|^{2l}} \, dx,
\end{equation}
where $l < d/2$, as discussed in \cite{yafaev}. This case, however,
 does not admit a critical exponent. The inequalities
thus obtained are of the type
\begin{equation*}
  \tr \left((-\Delta)^{l} - \hardyconst{d}{l} \frac{1}{|x|^{2l}}
    - V(x)\right)_-^{\gamma}
  \leq C \int_{\rd d} V(x)_+^{\gamma + \frac{d}{2l}} \, dx,
  \quad \gamma > 0.
\end{equation*}

For inequalities with critical exponent, we again turn to the sharp
Hardy-Rellich inequality \eqref{eq:introhardy}, but this time we
assume that $l > d/2$ and $l - d/2 \notin \integers$. In this case the
inequality is valid for $u \in \czeroinf{\rd d \setminus \{0\}}$.
In \cite{efjems}, the following inequality is
obtained for the case $l = d = 1$:
\begin{equation*}
  \tr \left(-\frac{d^2}{dx^2} - \hardyconst{1}{1} \frac{1}{x^2} -
    V(x)\right)_-^{\gamma}
  \leq C \int_0^\infty V(x)_+^{\gamma + \frac 1 2} \, dx,
  \quad \gamma \geq \frac 1 2,
\end{equation*}
where the operator on the left-hand-side is taken with Dirichlet
boundary conditions at $0$.
In the present paper, we develop these techniques further to prove the
critical exponent inequality
\begin{equation} \label{eq:introineq}
  \tr \left((-\Delta)^{2} - \hardyconst{d}{2} \frac{1}{|x|^4} 
    - V(x)\right)_-^{\gamma}
  \leq C_\gamma \int_{\rd d} V(x)_+^{\gamma + \frac d 4} \, dx,
  \quad \gamma \geq 1 - \frac d 4, 
\end{equation}
for the fourth-order cases $l = 2$ and $d = 1,3$, where the constant
$C_\gamma > 0$ is independent of $V$. Again, the operator
in question is considered with Dirichlet conditions at $0$. 

In fact, we prove such an inequality for a general fourth-order 
operator on the half-line, from which the results for the bi-laplacian 
with Hardy weight in dimensions $d=1,3$ follow. This way we actually get 
a more general result than \eqref{eq:introineq} in the case $d=1$, by
introducing a weight in the integral on the right-hand side. Such
weighted inequalities exist for any $\gamma > 0$, and the weight can
be chosen such that $\gamma$ is still the critical exponent.

The methods for proving this general result origin in \cite{weidl},
\cite{netrusov-weidl} and \cite{efjems}. 
In this paper no Sturm-Liouville or Green's function theory is needed.
One interesting technical result is the Sobolev-type inequality of Lemma
\ref{lem:supu}.

It is worth noting that the proofs employed here 
can be extended to higher order $l \geq 3$ 
and dimensions $d$ such that $l > d/2$ and $l - d/2 \notin \integers$, 
even if this would be somewhat tedious.

Finally, an immediate consequence of inequality \eqref{eq:introineq}
is a Sobolev-type inequality that estimates the $L^p$-norm of a
function $u \in \czeroinf {\rd \setminus \{0\}}$, for $1 < p \leq \infty$.

\section{Main Results}

We prove trace inequalities in
dimensions $d=1,3$ for the fourth-order operator
\begin{equation*}
  H = H_0 - V,
  \quad \text{ where } \quad
  H_0 = (-\Delta)^2 - \hardyconst{d}{2} \frac{1}{|x|^4}.
\end{equation*}

\begin{thm} \label{thm:main1d}
  Let $0 \leq \xp < 3$ and $\gamma \geq (3-\xp)/4$. Then, for
  any non-negative $V$ such that $V(x)^{\gamma + (1+\xp)/4} x^{\xp}$ is
  integrable on $(0,\infty)$, the form
  \begin{equation*}
    u \mapsto
    \int_0^\infty \left(
      \left|u''(x)\right|^2
      - \hardyconst{1}{2} \frac{|u(x)|^2}{x^4}
      - V(x)|u(x)|^2
    \right)  \, dx,
  \end{equation*}
  is lower semi-bounded on $\cz$. 
  Let $H_0 - V$ be the self-adjoint operator corresponding to the
  closure of this form.
  Then the negative spectrum of $H_0-V$ is discrete, 
  and there is a constant
  $C = C(\xp, \gamma) > 0$, independent of $V$, such that
  \begin{equation*}
    \tr (H_0-V)_-^\gamma 
    \leq C \int_0^\infty V(x)^{\gamma + \frac{1+\xp}{4}} x^{\xp} \, dx.
  \end{equation*}
\end{thm}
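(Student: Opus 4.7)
The plan is to apply the critical Lieb--Thirring strategy of \cite{weidl,netrusov-weidl,efjems}: reduce the Hardy--Rellich operator on the half-line to a translation-invariant fourth-order operator on $\reals$, split $\reals$ into unit intervals, derive a local Lieb--Thirring bound on each such interval using the Sobolev-type Lemma~\ref{lem:supu}, and sum the contributions.

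First I would reduce to a translation-invariant problem via the unitary map $\utrans:\ltwo\to\lspace{2}{\reals}$ given by $u(x)=x^{3/2}v(\log x)$. An integration-by-parts calculation shows that, with the sharp Hardy--Rellich constant $\hardyconst{1}{2}=9/16$,
\begin{equation*}
  \int_{0}^{\infty}\!\left(|u''|^{2}-\hardyconst{1}{2}\,x^{-4}|u|^{2}\right)dx
  =\int_{\reals}\!\left(|v''|^{2}+\tfrac{5}{2}|v'|^{2}\right)dt,
\end{equation*}
so $H_{0}$ is unitarily equivalent to the non-negative, translation-invariant, constant-coefficient operator $L=(d/dt)^{4}-\tfrac{5}{2}(d/dt)^{2}$ on $\lspace{2}{\reals}$, whose spectrum is $[0,\infty)$ with a virtual level at the origin (this is precisely what sharpness of $\hardyconst{1}{2}$ encodes). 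The potential transforms to $W(t)=e^{4t}V(e^{t})$ and the weighted integral becomes
\begin{equation*}
  \int_{0}^{\infty}V(x)^{\gamma+(1+\xp)/4}x^{\xp}\,dx
  =\int_{\reals}W(t)^{\gamma+(1+\xp)/4}e^{-4\gamma t}\,dt,
\end{equation*}
so that it suffices to prove $\tr(L-W)_{-}^{\gamma}\le C\int_{\reals}W^{\gamma+(1+\xp)/4}e^{-4\gamma t}\,dt$.

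Next I would partition $\reals$ into unit intervals $(b,b+1)$ and apply Dirichlet--Neumann bracketing on the form of $L$ to dominate $\tr(L-W)_{-}^{\gamma}$ by a sum of local traces $\tr(L_{b}-W)_{-}^{\gamma}$, where $L_{b}$ is the restriction of $L$ to $\htwob$. On each interval, Lemma~\ref{lem:supu} supplies an embedding $\htwob\hookrightarrow\lspace{\infty}{b,b+1}$ with constant uniform in $b$ (by translation invariance of $L$). A Birman--Schwinger argument built on this embedding produces a local Lieb--Thirring estimate of the form $\tr(L_{b}-W)_{-}^{\gamma}\le C\int_{b}^{b+1}W^{\gamma+(1+\xp)/4}\,dt$. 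The hypothesis $\gamma\ge(3-\xp)/4$ is equivalent to $\gamma+(1+\xp)/4\ge 1$, so these local bounds are directly summable in $\ell^{1}$ without loss, and since $e^{-4\gamma t}$ varies by only a bounded factor on each unit interval, summation reproduces the weighted global inequality.

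The main obstacle is producing the local Lieb--Thirring bound with the correct power of $W$ at the critical exponent, uniformly in $b$. Without the Hardy--Rellich cancellation, a standard local estimate for a fourth-order operator near the bottom of its spectrum is too weak to yield a critical-exponent inequality. Lemma~\ref{lem:supu} converts the sharpness of $\hardyconst{1}{2}$---equivalently, the absence of a zero-energy resonance for $L$---into a quantitative $L^{\infty}$ bound on each unit interval, which is exactly the ingredient that makes the Birman--Schwinger argument succeed at the critical exponent. Lower semi-boundedness of the form and discreteness of the negative spectrum then follow a posteriori from the trace estimate by the standard variational argument.
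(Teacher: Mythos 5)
Your reduction to a translation-invariant operator contains a genuine error that invalidates the rest of the argument. The substitution $u(x)=x^{3/2}v(\log x)$ does yield the form identity you state, but it is \emph{not} unitary from $\ltwo$ onto $\lspace 2 \reals$: one has $\int_0^\infty|u|^2\,dx=\int_\reals e^{4t}|v(t)|^2\,dt$. Hence the eigenvalue problem for $H_0-V$ transforms into the weighted pencil problem $(L-W)v=-\lambda e^{4t}v$, not into the spectral problem for $L-W$ in $\lspace 2 \reals$, and $\tr(H_0-V)_-^\gamma$ is not $\tr(L-W)_-^\gamma$. Indeed the target inequality you derive, $\tr(L-W)_-^\gamma\le C\int_\reals W^{\gamma+(1+\xp)/4}e^{-4\gamma t}\,dt$, is false on its face: $L$ has constant coefficients, so replacing $W(t)$ by $W(t-s)$ leaves the left-hand side unchanged while the right-hand side decays like $e^{-4\gamma s}$ as $s\to+\infty$. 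The paper never leaves the half-line; instead it factorizes the form as $h_0[u]=\int_0^\infty\bigl|\bigl(x^{-\alpha}(u/x^\beta)'\bigr)'\bigr|^2x^{2(\alpha+\beta)}\,dx$ with $\beta=3/2$ and $\alpha=(\sqrt{10}-2)/2$ (Proposition \ref{prop:niceV-1d}), absorbing the sharp Hardy--Rellich constant into a degenerate weight rather than into a purported translation invariance, and then applies the general half-line result of Proposition \ref{prop:niceV-general}.

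There is a second, independent gap: the local estimate $\tr(L_b-W)_-^\gamma\le C\int_b^{b+1}W^{\gamma+(1+\xp)/4}\,dt$ on a fixed unit interval cannot hold. The Neumann-type local operator obtained by bracketing annihilates nonzero functions (constants in your picture; $f_1=x^\beta$ and $f_2=x^{\alpha+\beta+1}$ for the paper's $H_b$), so for $W\equiv\epsilon$ the left-hand side is at least $\epsilon^\gamma$ while the right-hand side is $C\epsilon^{\gamma+(1+\xp)/4}$, a contradiction for small $\epsilon$ since $(1+\xp)/4>0$. This is precisely why a fixed partition cannot reach the critical exponent. The paper instead uses the Weidl-type partition adapted to $V$, defined by $(a_{j+1}-a_j)^{3-\xp}\int_{a_j}^{a_{j+1}}V(x)x^\xp\,dx=D_\xp$, and proves on each rescaled cell (Proposition \ref{prop:interval}) that the local operator has \emph{exactly two} negative eigenvalues, both bounded by a universal constant $E_\xp$; summing $(a_{j+1}-a_j)^{-(3-\xp)}$ over the cells reproduces $\int V x^\xp\,dx$ at the endpoint $\gamma=(3-\xp)/4$ with $V$ to the first power, after which Aizenman--Lieb lifts the bound to all $\gamma\ge(3-\xp)/4$ and the power $\gamma+(1+\xp)/4$ of $V$. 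Relatedly, Lemma \ref{lem:supu} is not an unconditional embedding of $\htwob$ into $L^\infty$: it holds only for $u$ satisfying the orthogonality conditions \eqref{eq:cond1} and \eqref{eq:cond2} with respect to the two zero modes, and this restriction is exactly the mechanism that caps the number of local negative eigenvalues at two. Finally, the passage from bounded compactly supported potentials to general $V$ with $V^{\gamma+(1+\xp)/4}x^\xp$ integrable requires the approximation argument of Section \ref{sec:mainproof}, which your sketch does not supply.
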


The weight in the integral on the right-hand side is important for two
different reasons. First of all, it allows us to
consider arbitrarily small $\gamma > 0$. Second, it allows us to pass to
higher dimensions, as seen in Lemma \ref{lem:3d-firstpart}
which is an important part of the proof of the following theorem:

\begin{thm} \label{thm:main3d}
  Let $\gamma \geq 1/4$. 
  For any non-negative $V \in \lspace {\gamma + 3/4} {\rd 3}$, the form
  \begin{equation*}
    u \mapsto
    \int_0^\infty \left(
      \left|\Delta u(x)\right|^2
      - \hardyconst{3}{2} \frac{|u(x)|^2}{|x|^4}
      - V(x)|u(x)|^2
    \right)  \, dx,
  \end{equation*}
  is lower semi-bounded on $\czthree$. 
  Let $H_0 - V$ be the self-adjoint operator corresponding to the
  closure of this form.
  Then the negative spectrum of $H_0-V$ is discrete, 
  and there is a constant
  $C = C(\gamma) > 0$, independent of $V$, such that
  \begin{equation*}
    \tr (H_0-V)_-^{\gamma}
    \leq C \int_{\rd 3} V(x)^{\gamma + \frac{3}{4}} \, dx.
  \end{equation*}
\end{thm}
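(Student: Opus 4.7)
The strategy is to reduce the three-dimensional problem to the half-line Lieb-Thirring inequality of Theorem \ref{thm:main1d}, with the weighted version for $\xp = 2$ providing the bridge from one to three dimensions. Decompose $u \in \czthree$ in spherical harmonics as $u(r\omega) = r^{-1}\sum_{n,m}\phi_{n,m}(r)Y_n^m(\omega)$. Under the resulting unitary isomorphism $L^2(\rd 3) \cong \bigoplus_{n,m} L^2(0,\infty)$, a direct computation shows that $H_0$ acts diagonally, with each channel reducing to the half-line operator
\begin{equation*}
  h_n = \left(-\frac{d^2}{dr^2} + \frac{n(n+1)}{r^2}\right)^2 - \frac{\hardyconst{3}{2}}{r^4}.
\end{equation*}
For $n=0$ this is precisely the fourth-order half-line operator of Theorem \ref{thm:main1d} (the $3$D and $1$D sharp Hardy-Rellich constants coincide, both equal to $9/16$), and for $n \geq 1$ the centrifugal barrier makes $h_n - h_0$ a non-negative form.

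The choice $\xp = 2$ is exactly what dovetails the two theorems: the admissibility condition $\gamma \geq (3-\xp)/4$ becomes $\gamma \geq 1/4$, which is the hypothesis of Theorem \ref{thm:main3d}, and the weighted right-hand side $\int_0^\infty W(r)^{\gamma + 3/4} r^2\,dr$ is exactly the radial form of $\int_{\rd 3} V(x)^{\gamma+3/4}\,dx$. The role of Lemma \ref{lem:3d-firstpart}, as advertised in the introduction, should be to produce, for each angular momentum $n$, a radial effective potential $W_n(r)$ satisfying
\begin{equation*}
  \int_{\rd 3} V(x)|u(x)|^2\,dx \leq \sum_{n,m}\int_0^\infty W_n(r)|\phi_{n,m}(r)|^2\,dr,
\end{equation*}
together with an estimate $\sum_n (2n+1)\int_0^\infty W_n(r)^{\gamma + 3/4} r^2\,dr \leq C\|V\|_{L^{\gamma+3/4}(\rd 3)}^{\gamma+3/4}$. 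A natural route is a H\"older-type estimate in the angular variable, $\int_{S^2} V(r,\omega)|f(\omega)|^2\,d\omega \leq \|V(r,\cdot)\|_{L^p(S^2)}\|f\|_{L^{2q}(S^2)}^2$, followed by a spherical Sobolev embedding that converts $\|f\|_{L^{2q}(S^2)}^2$ into a sum $\sum_{n,m}(1+n^2)^\alpha|\phi_{n,m}(r)|^2$, with the factor $(1+n^2)^\alpha$ being absorbed against the extra centrifugal terms present in $h_n$.

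Given such per-channel potentials $W_n$, applying Theorem \ref{thm:main1d} with $\xp = 2$ to each $h_n - W_n$ yields
\begin{equation*}
  \tr(h_n - W_n)_-^\gamma \leq C(\gamma)\int_0^\infty W_n(r)^{\gamma + 3/4} r^2\,dr,
\end{equation*}
and summing over $(n,m)$ with multiplicity $2n+1$ produces the desired three-dimensional trace bound. The main obstacle is Lemma \ref{lem:3d-firstpart}: one must trade angular regularity of $u$ against the centrifugal barrier in $h_n$ in just the right way so that the per-channel bounds sum to a finite multiple of $\|V\|_{L^{\gamma+3/4}(\rd 3)}^{\gamma+3/4}$. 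This quantitative balancing, rather than the (by now routine) application of Theorem \ref{thm:main1d}, is where the real work lies; the semi-boundedness and discreteness of the negative spectrum will then follow from the trace bound in the standard way by a Birman-Schwinger argument.
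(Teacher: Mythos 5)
Your spherical-harmonic reduction and the observation that $\xp=2$ in the half-line theorem is exactly what turns $\gamma\geq(3-\xp)/4$ into $\gamma\geq 1/4$ and the weight $r^2\,dr$ into the $3$D volume element are both correct, and they match the paper's treatment of the \emph{radial} sector. But the proposal has a genuine gap at its center, which you yourself flag: the construction of per-channel potentials $W_n$ with $\int_{\rd 3}V|u|^2\,dx\leq\sum_{n,m}\int_0^\infty W_n|\phi_{n,m}|^2\,dr$ \emph{and} $\sum_n(2n+1)\int_0^\infty W_n^{\gamma+3/4}r^2\,dr\lesssim\|V\|_{\gamma+3/4}^{\gamma+3/4}$ is never carried out, and the sketch is not self-consistent. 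If the angular H\"older/Sobolev step yields $\int_{\sphere}V(r\cdot)|f|^2\,d\sigma\leq\|V(r,\cdot)\|_{L^p(\sphere)}\sum_{n,m}(1+n^2)^\alpha|\phi_{n,m}(r)|^2$ and the factor $(1+n^2)^\alpha$ is ``absorbed against the centrifugal terms,'' the surviving potential $W_n(r)=\|V(r,\cdot)\|_{L^p(\sphere)}$ is independent of $n$, so the sum over $n$ with multiplicity $2n+1$ diverges; to make it converge you must retain a negative power of $n$ in $W_n$, i.e.\ absorb strictly more than the Sobolev embedding costs, and the centrifugal barrier only supplies positivity in the weighted form $\int|u|^2r^{-4}\,dr$, not against an unweighted $\int W_n|\phi|^2\,dr$. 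This balancing is precisely the hard analytic content, and it is missing. You have also misread the role of Lemma \ref{lem:3d-firstpart}: in the paper it concerns \emph{only} the $n=0$ channel, where $V^{(1)}$ is multiplication by the spherical average $\tilde V(r)$ and the only H\"older step is Jensen's inequality $\tilde V^{\gamma+3/4}\leq\frac{1}{\sigma(\sphere)}\int_{\sphere}V^{\gamma+3/4}\,d\sigma$.

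The paper avoids your per-channel problem entirely on the sector $n\geq 1$. By Lemma \ref{lem:orthhardy}, $\int|-u''+c\,u/r^2|^2\,dr\geq(c^2-\tfrac32c+\hardyconst{1}{2})\int|u|^2r^{-4}\,dr$, so for $c=n(n+1)\geq 2$ the Hardy term $-\hardyconst{3}{2}\int|u_n|^2r^{-4}\,dr$ is absorbed using only a fraction $1-\epsilon$ of the kinetic form, with $\epsilon=1/(1+\hardyconst{1}{2})$. What remains on $\hilbert_2$ is $\epsilon$ times the full bi-Laplacian form, and the trace bound there follows from the \emph{three-dimensional} Lieb--Thirring inequality for $(-\Delta)^2-V$ without Hardy weight (Lemma \ref{lem:nohardy}, Netrusov--Weidl), applied after controlling the off-diagonal pieces via $P_1WP_2+P_2WP_1\leq V^{(1)}+V^{(2)}$. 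No angular Sobolev embedding and no summation over channels is needed. Finally, your closing remark that semi-boundedness and discreteness for general $V\in L^{\gamma+3/4}$ ``follow by a Birman--Schwinger argument'' is not what is done: the paper first proves the bound for bounded, compactly supported $V$ and then passes to general $V$ by the monotone-approximation lemma of Section \ref{sec:mainproof}; some such limiting argument is required and is absent from your proposal.
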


In fact, these results follow from a result for a class of general 
fourth-order Schr\"odinger operators on the half-line. The proof of this result 
captures all the essential ideas in this paper, and can be used to
prove similar results for other fourth-order operators than the
bi-laplacian with a Hardy term.
\begin{thm} \label{thm:maingeneral}
  Let $\alpha \geq 0$, $\beta \geq 0$, $0 \leq \xp < 3$, 
  $\xp \leq 2\beta$ and $\gamma
  \geq (3-\xp)/4$. Then, for
  any non-negative $V$ such that $V(x)^{\gamma + (1+\xp)/4} x^{\xp}$ is
  integrable on $(0,\infty)$, the form
  \begin{equation*}
    u \mapsto
    \int_0^\infty \left( \left|
        \frac{d}{dx} \left(
          \frac{1}{x^\alpha}
          \frac{d}{dx} \left(\frac{u(x)}{x^\beta}\right)
        \right)
      \right|^2 x^{2(\alpha+\beta)}
      - V(x)|u(x)|^2
    \right)  \, dx,
  \end{equation*}
  is lower semi-bounded on $\cz$. 
  Let $H_0 - V$ be the self-adjoint operator corresponding to the
  closure of this form.
  Then the negative spectrum of $H_0-V$ is discrete, 
  and there is a constant
  $C = C(\alpha, \beta, \xp, \gamma) > 0$, independent of $V$, such that
  \begin{equation*}
    \tr (H_0-V)_-^\gamma 
    \leq C \int_0^\infty V(x)^{\gamma + \frac{1+\xp}{4}} x^{\xp} \, dx.
  \end{equation*}
\end{thm}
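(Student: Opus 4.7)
The plan is to follow the strategy of \cite{weidl, netrusov-weidl, efjems}: localize to unit intervals, prove a local Lieb--Thirring bound on each, and then sum the contributions. The weight $x^\xp$ on the right-hand side should arise naturally from the scaling behavior of each local piece.

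First, I would decouple the spectrum by Neumann-type bracketing at the integer points $x = 0, 1, 2, \ldots$. For each non-negative integer $b$, set $I_b = [b, b+1]$ and let $H_{0,b}$ be the local operator associated with the quadratic form restricted to $I_b$ (with Neumann-type conditions at the interior endpoints and the Dirichlet condition at $0$ when $b = 0$). The variational principle then yields
\[
  \tr (H_0 - V)_-^\gamma \leq \sum_{b=0}^\infty \tr(H_{0,b} - V|_{I_b})_-^\gamma,
\]
so it suffices to establish a local Lieb--Thirring bound on each $I_b$ with the right $b$-dependence and to sum.

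The heart of the argument is to prove, on each $I_b$, an estimate of the form
\[
  \tr(H_{0,b} - V|_{I_b})_-^\gamma \leq C\,(1+b)^{\xp} \int_{I_b} V(x)^{\gamma + (1+\xp)/4}\, dx
\]
at the critical exponent $\gamma = (3-\xp)/4$. The key tool here is the Sobolev-type inequality of Lemma~\ref{lem:supu}, which should control $\|u\|_{L^\infty(I_b)}$ in terms of $q_b[u]^{1/2}$ with explicit $b$-dependence dictated by the weights $x^\alpha$, $x^\beta$, and $x^{2(\alpha+\beta)}$ in the form. Combined with a Birman--Schwinger-type argument -- where the existence of an eigenvalue below $-\lambda$ forces $V$ to carry a definite amount of $L^p$-mass on $I_b$ -- this yields the critical-exponent local bound. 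The supercritical case $\gamma > (3-\xp)/4$ then follows from the critical case by the standard Aizenman--Lieb interpolation in the lifting parameter.

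The main obstacle will be establishing the correct $b$-dependence in the local estimate. This requires careful bookkeeping of the powers of $b$ arising from the singular weights $x^{-\alpha}$, $x^{-\beta}$, and $x^{2(\alpha+\beta)}$ when $x \in I_b$, both in the Sobolev-type embedding of Lemma~\ref{lem:supu} and in the Birman--Schwinger count. The hypothesis $\xp \leq 2\beta$ should play a crucial role in controlling the contribution from the interval $I_0$ adjacent to the singularity at $0$, while the constraint $\xp < 3$ ensures that the critical exponent $(3-\xp)/4$ remains strictly positive. Summing the local bounds across $b$ then converts the factor $\sum (1+b)^{\xp}$ together with $\int_{I_b} V^{\gamma+(1+\xp)/4}\, dx$ into the desired weighted integral $\int_0^\infty V(x)^{\gamma+(1+\xp)/4} x^{\xp}\, dx$, finishing the proof.
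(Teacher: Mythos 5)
There is a genuine gap at the heart of your argument: the decomposition into \emph{fixed} unit intervals $I_b=[b,b+1]$ cannot yield the critical-exponent local bound you postulate. After Neumann-type bracketing, each local operator $H_{0,b}$ annihilates the two functions $f_1(x)=x^{\beta}$ and $f_2(x)=x^{\alpha+\beta+1}$, so for every non-trivial $V\geq 0$ the operator $H_{0,b}-V|_{I_b}$ has at least two negative eigenvalues, and for $V=\epsilon W$ with $\epsilon\to 0$ these are of size comparable to $\epsilon\int_{I_b}W\,dx$ by first-order perturbation theory on the kernel. Hence the left-hand side of your local estimate behaves like $\epsilon^{\gc}$ with $\gc=(3-\xp)/4<1$, while the right-hand side is linear in $\epsilon$ (note that $\gamma+(1+\xp)/4=1$ at the critical exponent), so no uniform constant exists. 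For the same reason, summing $\sum_b\bigl(\int_{I_b}Vx^{\xp}\,dx\bigr)^{\gc}$ over a potential spread thinly across many intervals is not controlled by $\int_0^\infty V x^{\xp}\,dx$. Your ``an eigenvalue below $-\lambda$ forces a definite amount of mass'' step fails precisely because of these zero modes: arbitrarily little mass already produces negative eigenvalues.

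The paper's proof of Proposition \ref{prop:niceV-general} avoids this by choosing the partition \emph{adaptively}, depending on $V$: the points $a_j$ are defined by $(a_{j+1}-a_j)^{3-\xp}\int_{a_j}^{a_{j+1}}V(x)x^{\xp}\,dx=D_\xp$, each piece is mapped by a unitary dilation onto a unit interval $[b_j,b_j+1]$, and Proposition \ref{prop:interval} (whose proof uses Lemma \ref{lem:supu} exactly as you anticipate) guarantees that the rescaled operator has \emph{exactly two} negative eigenvalues, each bounded by a universal constant $E_\xp$. Undoing the scaling converts the bound $2E_\xp^{\gc}(a_{j+1}-a_j)^{-(3-\xp)}$ into $(2E_\xp^{\gc}/D_\xp)\int_{a_j}^{a_{j+1}}Vx^{\xp}\,dx$, which sums to the desired inequality; the first and last pieces of the partition carry no potential and contribute nothing. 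Your remaining steps (Aizenman--Lieb lifting to $\gamma>\gc$, the role of $\xp\leq 2\beta$ near the origin) do match the paper, but you would also need the approximation argument of Section \ref{sec:mainproof} to pass from bounded, compactly supported $V$ to general $V$ with $V^{\gamma+(1+\xp)/4}x^{\xp}$ integrable, which your sketch omits.
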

The proofs of these three theorems are postponed 
until Section \ref{sec:mainproof}

An immediate consequence of these theorems is the following
Sobolev-type inequality:
\begin{cor}
  Let $d = 1,3$ and $1 < p \leq \infty$.
  Then there are constants $D_1, D_2 > 0$ such that
  \begin{equation*}
    \left( \int_0^\infty \!\!\!\! |u|^{2p} dx \right)^{\frac 1 p}
    \!\! \leq \! \int_0^\infty \!\!\! \left(
      \left|u''(x)\right|^2
      \!\! - \! D_1 \frac{|u(x)|^2}{x^4}
      \! + \! D_2 |u(x)|^2 \!
    \right)\! dx,
    \quad  u \in \czeroinf{\rd d \! \setminus \!\! \{0\}}\!.
  \end{equation*}
\end{cor}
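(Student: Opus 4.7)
The plan is to derive this Sobolev-type inequality from the critical-exponent Lieb-Thirring bounds: Theorem \ref{thm:main1d} with $\xp = 0$ and $\gamma = 3/4$ when $d = 1$, and Theorem \ref{thm:main3d} with $\gamma = 1/4$ when $d = 3$. In both cases $\gamma + d/4 = 1$, so these bounds specialize to $\tr (H_0 - V)_-^\gamma \le C \int V\, dx$. I first prove the endpoint $p = \infty$ version of the inequality by testing the trace bound on potentials that concentrate at a point, and then interpolate between $L^2$ and $L^\infty$ to cover $1 < p < \infty$.

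\textbf{Endpoint.} Fix $u \in \czeroinf{\rd d \setminus \{0\}}$ and a point $y$ in its support. Choose a family of non-negative $V_\varepsilon$ supported in shrinking neighborhoods of $y$ with $\int V_\varepsilon\, dx = 1$ and $\int V_\varepsilon |u|^2\, dx \to |u(y)|^2$ as $\varepsilon \to 0$; each $\lambda V_\varepsilon$ is admissible in the relevant theorem. Combining the trivial bound $\tr(H_0 - \lambda V_\varepsilon)_-^\gamma \ge (-\lambda_1(H_0 - \lambda V_\varepsilon))_+^\gamma$ with the Rayleigh-Ritz estimate
\begin{equation*}
  \lambda_1(H_0 - \lambda V_\varepsilon)
  \le \frac{Q_0[u] - \lambda \int V_\varepsilon |u|^2\, dx}{\|u\|_2^2},
\end{equation*}
where $Q_0[u] = \int (|u''|^2 - \hardyconst{d}{2}|u|^2/|x|^4)\, dx$ (with $|u''|^2$ replaced by $|\Delta u|^2$ when $d = 3$), and letting $\varepsilon \to 0$, I arrive at
\begin{equation*}
  \lambda |u(y)|^2 \le Q_0[u] + C^{1/\gamma} \|u\|_2^2\, \lambda^{1/\gamma}.
\end{equation*}
Optimizing over $\lambda > 0$ gives $|u(y)|^2 \le K\, Q_0[u]^{1-\gamma} \|u\|_2^{2\gamma}$, and the weighted AM-GM inequality $a^{1-\gamma} b^\gamma \le (1-\gamma) a + \gamma b$ converts this into $\|u\|_\infty^2 \le A\, Q_0[u] + B \|u\|_2^2$ for explicit $A, B > 0$. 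A dilation $u(\cdot) \mapsto u(\mu \cdot)$ with $\mu = A^{-1/3}$ then normalizes the coefficient of $\int|u''|^2$ to $1$ and delivers the required constants $D_1, D_2 > 0$.

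\textbf{Interpolation and main obstacle.} For $1 < p < \infty$ the pointwise bound $|u|^{2p} \le \|u\|_\infty^{2p-2} |u|^2$ yields $\|u\|_{L^{2p}}^2 \le \|u\|_\infty^{2-2/p} \|u\|_2^{2/p}$. Substituting the endpoint estimate and applying Young's inequality with conjugate exponents $p/(p-1)$ and $p$ produces the desired inequality (at the cost of enlarging $D_2$ by $1$). The main technical obstacle is the approximation $V_\varepsilon \to \delta_y$ in the endpoint step: one has to verify that each $\lambda V_\varepsilon$ satisfies the integrability hypothesis of Theorems \ref{thm:main1d} and \ref{thm:main3d} (immediate since $V_\varepsilon$ is bounded and compactly supported away from the origin) and that the resulting eigenvalue bound passes to the limit. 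Once this is settled, the remainder is a clean combination of the variational principle, weighted AM-GM, and standard $L^2$-$L^\infty$ interpolation.
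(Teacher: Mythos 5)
Your proposal is correct, but it takes a genuinely different route from the paper. The paper fixes the conjugate exponent $q$ of $p$, applies Theorem \ref{thm:main1d} with $\xp=0$ and $\gamma=q-\tfrac14$ (so that $\gamma+\tfrac14=q$) to get $\inf\spec{H_0-V}\geq -C^{1/\gamma}\|V\|_q^{q/\gamma}$, and then reads off $\||u|^2\|_p$ directly as the norm of the bounded linear functional $V\mapsto\int V|u|^2\,dx$ on $\lspace q{0,\infty}$ via the Riesz representation theorem; no optimization over a coupling constant and no approximation of measures is needed, and all $p$ are handled uniformly. You instead use only the critical case $\gamma=1-\tfrac d4$ (so $\gamma+\tfrac d4=1$), extract the $L^\infty$ endpoint by letting $\lambda V_\varepsilon$ concentrate at a point, optimize over $\lambda$, and then interpolate $L^2$--$L^\infty$ with Young's inequality. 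Both arguments ultimately rest on the same ingredient, namely the lower bound on the bottom of the spectrum furnished by the theorems (neither uses more than the first eigenvalue), and your delta-approximation step is legitimate since each $V_\varepsilon$ is bounded, compactly supported away from the origin, and has $\int V_\varepsilon\,dx=1$, so the hypotheses of the theorems are met and the Rayleigh--Ritz bound passes to the limit by continuity of $u$. What your approach buys is that it isolates a clean intermediate statement, $\|u\|_\infty^2\leq A\,Q_0[u]+B\|u\|_2^2$, and needs only the single (critical) instance of the trace inequality; what it costs is the extra limiting and optimization machinery that the duality argument avoids. Two small points to tidy up: for $d=3$ the right-hand side must be read with $\Delta u$ and $\int_{\rd 3}$ (as the paper itself implicitly does), and your dilation exponent $\mu=A^{-1/3}$ is the $d=1$ scaling; in $d=3$ the form $Q_0$ scales like $\mu^{4-d}=\mu$, so the normalizing choice is $\mu=A^{-1}$. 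Neither affects the validity of the argument.
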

\begin{proof}
  We prove only the case $d = 1$, as the other case is similar.
  Let $q \geq 1$ be such that $p^{-1} + q^{-1} = 1$. 
  Setting $\xp = 0$ and $\gamma = q - 1/4$ in Theorem \ref{thm:main1d}
  and letting
  \begin{equation*}
    E(V) = \inf \spec {H_0 - V}
  \end{equation*}
  we obtain that
  \begin{equation*}
    E(V)^{\gamma} \geq -C \|V\|_q^q \, dx,
  \end{equation*}
  for any non-negative $V \in \lspace q {0,\infty}$, where 
  $C > 0$ is independent of $V$. It follows that
  \begin{equation*}
    H_0 - V + C^{1/\gamma} \|V\|_q^{q/\gamma} \geq 0,
  \end{equation*}
  and hence
  \begin{equation} \label{eq:corpos}
    \int_0^\infty \left(
      \left|u''(x)\right|^2
      - \hardyconst{1}{2} \frac{|u(x)|^2}{x^4}
      - V(x)|u(x)|^2
      + C^{1/\gamma} \|V\|_q^{q/\gamma} |u(x)|^2
    \right)  \, dx \geq 0,
  \end{equation}
  for any $u \in \cz$ and any non-negative $V \in \lspace q
  {0,\infty}$. 
  Fix $u \in \cz$ and consider the linear
  functional $L_u : \lspace q {0,\infty} \to \complex$ given by
  \begin{equation*}
    L_u V = \int_0^\infty V|u|^2 \, dx.
  \end{equation*}
  Choose $V \in \lspace p {0,\infty}$ with $\|V\|_q = 1$
  and write $V = V_R +
  iV_I$, where $V_R$ and $V_I$ are
  real-valued. Furthermore, let $V^+_R$ and $V^-_R$ be the
  positive and negative parts of $V^R$, respectively. Define 
  $V^+_I$ and $V^-_I$ similarly.
  Note that $\|V^+_R\|_q \leq \|V\|_q = 1$ and hence by \eqref{eq:corpos},
  \begin{equation*}
    L_u V^+_R \leq 
    \int_0^\infty \left(
      \left|u''(x)\right|^2
      - \hardyconst{1}{2} \frac{|u(x)|^2}{x^4}
      + C^{1/\gamma} |u(x)|^2
    \right)  \, dx,
  \end{equation*}
  and similarly for $V_-^R$, $V_+^I$ and $V_-^I$. Hence $L_u$ is a
  bounded functional with
  \begin{equation} \label{eq:intro-functionalbound}
    \left\| L_u \right\| \leq \int_0^\infty \left(
      \left|u''(x)\right|^2
      - \hardyconst{1}{2} \frac{|u(x)|^2}{x^4}
      + C^{1/\gamma} |u(x)|^2
    \right)  \, dx,
  \end{equation}
  The Riesz representation theorem give us that $|u|^2 \in \lspace p
  {0,\infty}$ with
  \begin{equation*}
    \left\||u|^2\right\|_p 
    = \|L_u\|
  \end{equation*}
  and since $u \in \cz$ was arbitrary, the result follows from
  \eqref{eq:intro-functionalbound}.
\end{proof}

\section{An Auxiliary Operator on a Finite Interval} \label{sec:interval}

In this section, some auxiliary results for a certain operator on a
finite subinterval of $(0,\infty)$ will be proved. These results are the key
ingredients in the proof of Theorem \ref{thm:maingeneral}.
Throughout this section, the constants $\alpha$ and $\beta$ will be fixed and
satisfy
\begin{equation*}
  \alpha \geq 0
  \quad \text{ and } \quad
  0 \leq \beta < \frac 3 2 + \alpha.
\end{equation*}
For $b > 0$, define the closed quadratic form
\begin{equation*}
  h_b[u] = \int_b^{b+1} \left|
    \left(\frac{1}{x^\alpha}\left(\frac{u(x)}{x^\beta}\right)'\right)'
  \right|^2 x^{2(\alpha+\beta)} \, dx,
\end{equation*}
with domain $\qdom{h_b} = \htwob$.
As usual, there is a canonically defined sesqui-linear form, denoted by
$h_b[\cdot,\cdot]$ from which $h_b$ arises, but we will make no
distinction between the quadratic and sesqui-linear forms.
Let $H_b$ denote the self-adjoint
operator in $\ltwob$ associated with $h_b$. We will henceforth fix the
functions
\begin{equation*}
  f_1(x) = x^{\beta}
  \quad \text{ and } \quad
  f_2(x) = x^{\alpha + \beta + 1}.
\end{equation*}
and note that $f_1, f_2 \in \qdom{h_b}$, as well as $h_b[f_1,v] =
h_b[f_2,v] = 0$ for any function $v \in \qdom{h_b}$. Therefore, $f_1, f_2 \in
\dom{H_b}$ and
\begin{equation*}
  H_bf_1 = 0 
  \quad \text{ and } \quad
  H_bf_2 = 0.
\end{equation*}

When describing properties for $h_b$, certain natural conditions on
the functions $u \in \qdom{h_b}$ will appear. These conditions are
\begin{equation} \label{eq:cond1}
  \int_b^{b+1}u(x)x^\beta \, dx = 0
\end{equation}
and
\begin{equation} \label{eq:cond2}
  \int_b^{b+1}\left(\frac{u(x)}{x^\beta}\right)'x^\alpha \, dx = 0.
\end{equation}

Let us start by looking more closely at the nature of these
conditions:
\begin{lem} \label{lem:conddescr}
  Let $b > 0$. Then the following hold:
  \begin{enumerate}[{\normalfont (i)}]
  \item
    If $u \in \qdom{h_b}$ satisfies $u \neq 0$, \eqref{eq:cond1} and
    \eqref{eq:cond2}, then the functions $f_1$, $f_2$ and $u$ are 
    linearly independent.
  \item \label{it:decompcond}
    For any $v \in \qdom{h_b}$, there are constants
    $c_1, c_2 \in \complex$ and a function $u \in
    \qdom{h_b}$ such that \eqref{eq:cond1} and \eqref{eq:cond2} hold
    and such that
    \begin{equation*}
      v = c_1f_1 + c_2f_2 + u.
    \end{equation*}
  \item \label{it:dimcond}
    Suppose that $F \subset \qdom{h_b}$ in a linear set and has
    $\dim F \geq 3$. Then there is $u \in F$ with $u \neq 0$ that satisfies
    \eqref{eq:cond1} and \eqref{eq:cond2}.
  \end{enumerate}
\end{lem}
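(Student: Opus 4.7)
The plan is to reinterpret conditions \eqref{eq:cond1} and \eqref{eq:cond2} as the vanishing of two linear functionals on $\qdom{h_b}$, namely
\[
  \ell_1(u) = \int_b^{b+1} u(x)\, x^\beta \, dx,
  \qquad
  \ell_2(u) = \int_b^{b+1} \left(\frac{u(x)}{x^\beta}\right)' x^\alpha \, dx,
\]
and then to exploit the fact that these functionals act in a particularly simple (triangular) way on the span of $f_1$ and $f_2$.

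The first step is a short direct computation of the four numbers $\ell_i(f_j)$. Since $f_1/x^\beta \equiv 1$, we get $\ell_2(f_1) = 0$; since $(f_2/x^\beta)' = (\alpha+1)x^\alpha$, we get $\ell_2(f_2) = (\alpha+1)\int_b^{b+1} x^{2\alpha}\, dx > 0$; and both $\ell_1(f_1) = \int_b^{b+1} x^{2\beta}\, dx$ and $\ell_1(f_2) = \int_b^{b+1} x^{\alpha+2\beta+1}\, dx$ are strictly positive since $b > 0$. Thus the $2 \times 2$ matrix $(\ell_i(f_j))_{i,j=1,2}$ is upper-triangular with nonzero diagonal entries, hence invertible.

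With this observation the three assertions follow cleanly. For (i), suppose $c_1 f_1 + c_2 f_2 + c_3 u = 0$; applying $\ell_2$ and using $\ell_2(f_1) = 0$ together with $\ell_2(u) = 0$ yields $c_2 \ell_2(f_2) = 0$, hence $c_2 = 0$, then applying $\ell_1$ and using $\ell_1(u) = 0$ gives $c_1 = 0$, and the remaining equation $c_3 u = 0$ forces $c_3 = 0$ since $u \neq 0$. For (ii), writing $v = c_1 f_1 + c_2 f_2 + u$ with $\ell_1(u) = \ell_2(u) = 0$ is equivalent to solving the $2\times 2$ linear system $\sum_j c_j \ell_i(f_j) = \ell_i(v)$ for $i=1,2$, which has a unique solution by invertibility of the coefficient matrix, and $u := v - c_1 f_1 - c_2 f_2$ then lies in $\qdom{h_b}$ by linearity. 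For (iii), the restriction of the pair $(\ell_1, \ell_2)$ to a subspace $F \subset \qdom{h_b}$ of dimension at least three is a linear map into $\complex^2$, whose kernel therefore contains a nonzero element; this is the required $u$.

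There is no substantive obstacle here. The whole lemma is engineered around the choice of $f_1$ and $f_2$ as two linearly independent solutions of $H_b f = 0$, and it is precisely this choice that makes the matrix $(\ell_i(f_j))$ triangular and the above arguments essentially immediate.
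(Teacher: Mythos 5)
Your proof is correct, and it is a genuinely cleaner route than the one in the paper, though both rest on the same structural observation. The paper works with the map $T$ defined by $(Tv)(x) = (v(x)/x^\beta)'$ and the facts $Tf_1 = 0$, $Tf_2 = (\alpha+1)x^\alpha$; it proves (ii) by a two-stage construction (project $Tv$ onto $x^\alpha$ in $\ltwob$, integrate the remainder back up, then project onto $f_1$), and proves (iii) by showing that a suitable two-dimensional subspace $G$ has two-dimensional image $TG$ and picking a nonzero element of $TG$ orthogonal to $x^\alpha$. You instead dualize: you read \eqref{eq:cond1} and \eqref{eq:cond2} as the vanishing of the linear functionals $\ell_1,\ell_2$, compute that the matrix $\bigl(\ell_i(f_j)\bigr)$ is upper triangular with nonzero diagonal, and then (i) is triangular elimination, (ii) is the solvability of a $2\times 2$ system, and (iii) is rank--nullity applied to $(\ell_1,\ell_2)\colon F \to \complex^2$. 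The facts $\ell_2(f_1)=0$ and $\ell_2(f_2)\neq 0$ are exactly the paper's $Tf_1 = 0$ and $Tf_2 = (\alpha+1)x^\alpha$ in disguise, so the two proofs encode the same triangular structure; what your version buys is brevity and the elimination of the explicit integration/projection steps (and of the slightly delicate dimension count for $TG$), at the cost of not producing the decomposition in (ii) by an explicit formula. Both functionals are well defined on $\qdom{h_b} = \htwob$, so there is no hidden regularity issue, and your argument is complete as stated.
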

\begin{proof}
  It will be useful to consider the transformation $T$ defined by 
  \begin{equation*}
    (Tv)(x) = \left(\frac{v(x)}{x^\beta}\right)',
    \quad v \in \honeb,
  \end{equation*}
  as well as the function $g(x) = x^\alpha$. Note that $Tf_1 =
  0$ and $Tf_2 = (\alpha + 1)g$.
  \begin{enumerate}[{\normalfont (i)}]
  \item 
    Let $u \in \qdom{h_b}$ satisfy $u \neq 0$, \eqref{eq:cond1} and
    \eqref{eq:cond2}. Because of \eqref{eq:cond1}, it must be that $Tu
    \neq 0$. Also, since $\int Tu \cdot g \, dx = 0$ by
    \eqref{eq:cond2}, the functions $Tu$ and $g$ are linearly independent.
    
    Choose scalars $\eta_1, \eta_2, \eta_3$ such that 
    $\eta_1f_1 + \eta_2f_2 + \eta_3u = 0$. Then
    \begin{equation*}
      0 = T\left(\eta_1f_1 + \eta_2f_2 + \eta_3u\right)
      = \eta_2 (\alpha + 1)g + \eta_3Tu.
    \end{equation*}
    But since the functions $g$ and $Tu$ are 
    linearly independent, it must be
    that $\eta_2 = \eta_3 = 0$. It follows that $\eta_1f_1 = 0$, and
    thus also $\eta_1 = 0$. 

    \item 
      Choose $v \in \qdom{h_b}$.
      Let $w = Tv \in \ltwob$. Using orthogonal
      projections one finds that there is a constant
      $c$ and a function $\tilde w$ such that $w = cg + \tilde
      w$ and $(\tilde w, g) = 0$. Let 
      \begin{equation*}
        \tilde u(x) = x^{\beta} \int_b^x \tilde w(t) \, dt
        \quad \text{ and } \quad
        c_2 = \frac{c}{\alpha + 1}.
      \end{equation*}
      Again by projecting, we find a constant $d$ and a 
      function $u$ such that $\tilde u =
      df_1 + u$ and $(u,f_1) = 0$, that is, $u$ satisfies
      $\eqref{eq:cond1}$. 
      Note that $Tu = T\tilde u - dTf_1 = T\tilde u = \tilde w$. 
      Since $(\tilde w, g) = 0$, we get that $u$ satisfies 
      \eqref{eq:cond2}. 
      Now, 
      $T(c_2 f_2 + u) = cg + \tilde w = w$. Hence
      $T\left(v - (c_2f_2 + u)\right) = 0$,
      and therefore there must be a constant $c_1$ such that
      \begin{equation*}
        \frac{v(x) - (c_2f_2(x) + u(x))}{x^\beta} = c_1.
      \end{equation*}
      Thus,
      \begin{equation*}
        v = c_1f_1 + c_2f_2 + u,
      \end{equation*}
      and since $f_1,f_2,v \in \qdom{h_b}$, it follows that $u \in
      \qdom{h_b}$.

    \item Clearly, $F$ has a two-dimensional subspace $G$ orthogonal
      to $\linspan\{f_1\}$. This means that every $u \in G$ satisfies
      \eqref{eq:cond1}. 

      Let us prove that $TG$ is
      two-dimensional. Let $v_1, v_2 \in G$ be linearly
      independent. Choose $c_1, c_2 \in \complex$ such that 
      $c_1Tv_1 + c_2Tv_2 = 0$. It follows ny the definition of $T$ 
      that $c_1v_1 + c_2v_2 = cf_1$ for some constant $c \in
      \complex$. Since the functions $v_1, v_2$ and $f_1$ are linearly
      independent, it must be that $c_1 = c_2 = 0$. This shows that
      $Tv_1$ and $Tv_2$ are linearly independent. Hence $TG$ is
      two-dimensional. 

      In particular there is a $w \in TG$, with $w \neq 0$, orthogonal
      to $g$. Find $u \in G$ such that $Tu = w$. Then $u$ satisfies
      $u \neq 0$ and \eqref{eq:cond2}. And since $u \in G$, it also
      satisfies \eqref{eq:cond1}.
      \qedhere
  \end{enumerate}
\end{proof}

\begin{lem} \label{lem:supu}
  Let $0 \leq \xp < 3$ and $\xp \leq 2\beta$. 
  There is a constant $C_\xp > 0$ such that
  \begin{equation*}
    \sup_{b \leq y \leq b+1}\frac{|u(y)|^2}{y^{\xp}} \leq C_\xp h_b[u],
  \end{equation*}
  for any $b > 0$ and any
  $u \in \qdom{h_b}$ that satisfies \eqref{eq:cond1} and \eqref{eq:cond2}.
\end{lem}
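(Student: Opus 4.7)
The plan is to reduce the estimate to two successive weighted Cauchy--Schwarz arguments, exploiting that conditions \eqref{eq:cond1} and \eqref{eq:cond2} force two auxiliary functions to vanish somewhere in $[b,b+1]$.

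Introduce $V(x) = u(x)/x^\beta$ and $W(x) = V'(x)/x^\alpha$, so that
\[
h_b[u] = \int_b^{b+1} |W'(x)|^2 x^{2(\alpha+\beta)}\,dx,
\]
and \eqref{eq:cond1}, \eqref{eq:cond2} become $\int_b^{b+1} V(x)\, x^{2\beta}\,dx = 0$ and $\int_b^{b+1} W(x)\, x^{2\alpha}\,dx = 0$. Since $V$ and $W$ are continuous on $[b,b+1]$, the intermediate value theorem provides $c_V, c_W \in [b,b+1]$ with $V(c_V) = W(c_W) = 0$. A weighted Cauchy--Schwarz applied to $W(y) = \int_{c_W}^y W'(t)\,dt$ then yields the pointwise estimate
\[
|W(y)|^2 \le h_b[u]\left|\int_{c_W}^y t^{-2(\alpha+\beta)}\,dt\right|.
\]

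I would next insert this into the identity $V(y) = \int_{c_V}^y t^\alpha W(t)\,dt$, use $W(t) = \int_{c_W}^t W'(s)\,ds$, and apply Fubini to obtain a representation $V(y) = \int_b^{b+1} G(y,s)\,W'(s)\,ds$ with an explicit kernel $G$ built from pieces of the form $\bigl(y^{\alpha+1}-\max(s,c_V)^{\alpha+1}\bigr)/(\alpha+1)$, with signs determined by the relative ordering of $c_V,c_W,y$. A second weighted Cauchy--Schwarz then produces
\[
|V(y)|^2 \le h_b[u]\int_b^{b+1} G(y,s)^2\, s^{-2(\alpha+\beta)}\,ds.
\]
Since $|u(y)|^2/y^{\xp} = y^{2\beta-\xp}|V(y)|^2$, the lemma is reduced to the uniform estimate
\[
y^{2\beta-\xp}\int_b^{b+1} G(y,s)^2\, s^{-2(\alpha+\beta)}\,ds \le C_{\xp},\quad b>0,\ y\in[b,b+1].
\]

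The main obstacle is proving this uniform bound across all regimes of $b$. For $b$ bounded away from $0$ the weight $s^{-2(\alpha+\beta)}$ is bounded on $[b,b+1]$ and the estimate follows from a standard Sobolev argument with coefficients tracked through the scaling $y\sim b$. The delicate case is $b\to 0$, where the weight may be non-integrable; uniform control then requires exploiting that $G$ vanishes at $s=c_V$ and at $s=y$ and admits an estimate such as $|G(y,s)| \le C\, y^\alpha |y-s|$, producing a $(y-s)^2$ factor that tames the singular weight after the rescaling $s = y\tau$. The resulting scale-invariant integrals are finite precisely because $\xp < 3$ and $\xp \le 2\beta$, while the standing assumption $\beta < \tfrac{3}{2}+\alpha$ guarantees integrability of the remaining kernel pieces near $s = 0$.
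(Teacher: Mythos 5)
Your argument is exactly the paper's own treatment of the regime $b>1$ (there the authors also extract zeros $x_1,x_2$ of $v=u/x^\beta$ and $v'$ from \eqref{eq:cond1}--\eqref{eq:cond2} and apply Cauchy--Schwarz twice), but for small $b$ it has a genuine gap, and the paper uses a different device there. First, the kernel bound $|G(y,s)|\le C\,y^\alpha|y-s|$ is false in some orderings: if $c_V\le s\le c_W$ the relevant piece of the kernel is $\pm\bigl(s^{\alpha+1}-c_V^{\alpha+1}\bigr)/(\alpha+1)$, which for $c_V$ near $b$ and $s$ near $c_W\approx y$ is of order $1$ while $y^\alpha|y-s|$ is near $0$. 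Second, and fatally, the uniform estimate you reduce the lemma to is not true. Take $c_V$ near $b$, $c_W$ and $y$ near $b+1$; then on $[c_V,c_W]$ one has $G(y,s)=-\bigl(s^{\alpha+1}-c_V^{\alpha+1}\bigr)/(\alpha+1)$, and
\begin{equation*}
  y^{2\beta-\xp}\int_b^{b+1}G(y,s)^2 s^{-2(\alpha+\beta)}\,ds
  \;\gtrsim\;\int_{2b}^{c_W} s^{2\alpha+2}\,s^{-2(\alpha+\beta)}\,ds
  \;=\;\int_{2b}^{c_W} s^{2-2\beta}\,ds,
\end{equation*}
which diverges as $b\to 0$ whenever $\beta\ge 3/2$. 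Since Proposition \ref{prop:niceV-1d} applies the lemma with $\beta=3/2$ (and the standing hypothesis allows any $\beta<3/2+\alpha$), your reduction target fails precisely in the case needed for the main theorem. The root cause is that extracting only the \emph{existence} of zeros $c_V,c_W$ discards most of the information in \eqref{eq:cond1}--\eqref{eq:cond2}: your kernels vanish at $s=c_V$ or $s=c_W$ to order $s^{\alpha+1}$ or $s^{\beta+1}$ at best, whereas the singular weight at $s=b\to 0$ demands vanishing at $s=b$ to the higher order $s^{2\alpha+1}$, $s^{2\beta+1}$.

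The paper's Case~1 ($0<b\le 1$) fixes exactly this: instead of zeros, it uses the orthogonality relations themselves to integrate by parts against the weights $t^{2\beta+1}-b^{2\beta+1}$ and $x^{2\alpha+1}-b^{2\alpha+1}$ (and their mirror images at $b+1$), producing representation kernels that vanish at the left endpoint like $x^{2\alpha+1}-b^{2\alpha+1}$. After Cauchy--Schwarz the critical integrand near $s=b$ is then $s^{2\alpha+2-2\beta}$ rather than your $s^{2-2\beta}$, and this is integrable exactly under the standing assumption $\beta<3/2+\alpha$. To repair your proof, replace the intermediate-value step by this integration-by-parts representation for $0<b\le 1$ (or, equivalently, add the appropriate constants to your kernels, which \eqref{eq:cond1} and \eqref{eq:cond2} permit for free), and keep your two-step Cauchy--Schwarz argument only for $b$ bounded away from $0$.
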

\begin{proof}
  The proof is divided into two cases. The first case is when $0 < b
  \leq 1$ and the second case is when $b > 1$.

  \emph{Case 1}. Choose $b$ with $0 < b \leq 1$ and $u \in \qdom{h_b}$
  with $u \neq 0$, that satisfies \eqref{eq:cond1} and \eqref{eq:cond2}. 
  Also choose $y$ with $b \leq y \leq b+1$. Clearly, 
  \begin{align*}
    \frac{u(y)}{y^\beta}\left(
      y^{2\beta+1}\!\!\right.&\left.\,-b^{2\beta+1}
    \right)
    = \int_b^y \!\! \left(
      \frac{u(t)}{t^\beta}\left(t^{2\beta+1}\!\!-\!\!b^{2\beta+1}\right)
    \right)' \, dt \\
    & = (2\beta\!\!+\!\!1)\int_b^y \!\! u(t)t^\beta \, dt
    + \int_b^y \!\! \left(\frac{u(t)}{t^\beta}\right)' \!\!
    \left(t^{2\beta+1} \!\! - \!\! b^{2\beta+1}\right) \, dt,
  \end{align*}
  and also
  \begin{align*}
    \frac{u(y)}{y^\beta}\left(
      (b\!\!+\!\!1)^{2\beta+1}\!\!\right.&\left.\,-y^{2\beta+1}
    \right)
    = -\int_y^{b+1} \!\! \left(
      \frac{u(t)}{t^\beta}\left(
        (b\!\!+\!\!1)^{2\beta+1}\!\!-\!\!t^{2\beta+1}\right)
    \right)' \, dt \\
    & = (2\beta\!\!+\!\!1)\int_y^{b+1} \!\!\!\!\!\!
    u(t)t^\beta \, dt
    - \int_y^{b+1}\!\!\left(\frac{u(t)}{t^\beta}\right)' \!\!
    \left(
      (b\!\!+\!\!1)^{2\beta+1} \!\! - \!\! t^{2\beta+1}
    \right) \, dt.
  \end{align*}
  Hence, using \eqref{eq:cond1}, 
  \begin{align} \label{eq:firstder}
    \frac{u(y)}{y^\beta}
    =
    \Gamma_1 
    & \left( \int_b^y \left(\frac{u(t)}{t^\beta}\right)'  \left(
        t^{2\beta+1} - b^{2\beta+1}
      \right) \, dt \right. \\
    & \left. -
      \int_y^{b+1}\left(\frac{u(t)}{t^\beta}\right)' 
      \left(
        (b+1)^{2\beta+1} - t^{2\beta+1}
      \right) \, dt 
    \right)
  \end{align}
  where
  \begin{equation*}
    \Gamma_1 = \frac{1}{(b + 1)^{2\beta+1} - b^{2\beta+1}}.
  \end{equation*}
  Similarly to the above, using \eqref{eq:cond2}, we find
  that for any $t$ with $b \leq t \leq b+1$, 
  \begin{align} \label{eq:secondder}
    \frac{1}{t^\alpha}\left(
      \frac{u(t)}{t^\beta}
    \right)'
    = 
    \Gamma_2
    & \left(\int_b^t \left(
        \frac{1}{x^\alpha}\left(\frac{u(x)}{x^\beta}\right)'
      \right)'\left(x^{2\alpha+1}-b^{2\alpha+1}\right) \, dx
    \right. \\
    & \left. - \int_t^{b+1}
      \left(
        \frac{1}{x^\alpha}\left(\frac{u(x)}{x^\beta}\right)'
      \right)'
      \left((b+1)^{2\beta+1}-x^{2\beta+1}\right)
      \, dx
    \right),
  \end{align}
  where
  \begin{equation*}
    \Gamma_2 = \frac{1}{(b + 1)^{2\alpha+1} - b^{2\alpha+1}}.
  \end{equation*}
  Combine \eqref{eq:firstder} and \eqref{eq:secondder} to obtain
  \begin{equation*}
    \frac{u(y)}{y^{\xp/2}} = \Gamma_1\Gamma_2 \cdot y^{\beta -
      \frac{\xp}{2}} \Big( 
      I_1 - I_2 - I_3 + I_4
    \Big),
  \end{equation*}
  where
  \begin{equation*}
    I_1 =
    \int_b^y
    \!\!\!\! \int_b^t \!\!
    \left(
          \frac{1}{x^\alpha}\left(\frac{u(x)}{x^\beta}\right)'
        \right)' \!\!
        \left(x^{2\alpha+1} \! - \! b^{2\alpha+1}\right)
    t^\alpha 
    \left(t^{2\beta+1} \! - \! b^{2\beta+1}\right) \, dx \, dt,
  \end{equation*} 
  \begin{equation*}
    I_2 =
    \int_y^{b+1}
    \!\!\!\! \int_b^t \!\!
    \left(
          \frac{1}{x^\alpha}\left(\frac{u(x)}{x^\beta}\right)'
        \right)' \!\!
        \left(x^{2\alpha+1} \! - \! b^{2\alpha+1}\right)
    t^\alpha
    \left(
      (b\!\!+\!\!1)^{2\beta+1} \! - \! t^{2\beta+1}
    \right) \, dx \, dt ,
  \end{equation*}
  \begin{equation*}
    I_3 =
    \int_b^y
    \!\!\!\! \int_t^{b+1} \!\!
    \left(
      \frac{1}{x^\alpha}\left(\frac{u(x)}{x^\beta}\right)'
    \right)' \!\!
    \left((b\!\!+\!\!1)^{2\alpha+1} \! - \! x^{2\alpha+1}\right)
    t^\alpha 
    \left(t^{2\beta+1} \! - \! b^{2\beta+1}\right) \, dx \, dt
  \end{equation*}
  and
  \begin{equation*}
    I_4 =
    \int_y^{b+1}
    \!\!\!\! \int_t^{b+1} \!\!
    \left(
      \frac{1}{x^\alpha}\left(\frac{u(x)}{x^\beta}\right)'
    \right)' \!\!
    \left((b\!\!+\!\!1)^{2\alpha+1} \! - \! x^{2\alpha+1}\right)
    t^\alpha
    \left(
      (b\!\!+\!\!1)^{2\beta+1} \! - \! t^{2\beta+1}
    \right) \, dx \, dt.
  \end{equation*}
  It remains to find constants $C_1, C_2, C_3, C_4 > 0$, independent of
  $b, u$ and $y$, such that
  \begin{equation*}
    \left|y^{\beta - \frac{\xp}{2}} I_j\right|^2 \leq C_j h_b[u],
    \quad j = 1,2,3,4,
  \end{equation*}
  since then we can use the fact that $\Gamma_1 \leq 1$ and 
  $\Gamma_2 \leq 1$, to conclude that
  \begin{equation*}
    \frac{|u(y)|^2}{y^\xp} \leq 4\left(C_1 + C_2 + C_3 + C_4\right) h_b[u].
  \end{equation*}
  Note that by the Cauchy-Schwarz inequality,
  \begin{equation*}
    \left|y^{\beta - \frac{\xp}{2}} I_1\right|^2
    \leq 
    J_1(b,y)^2 \cdot h_b[u]
  \end{equation*}
  where
  \begin{equation*}
    J(b,y) = 
    y^{\beta - \frac{\xp}{2}} \int_b^y t^{\alpha}
    \left(t^{2\beta+1}-b^{2\beta+1}\right)
    \left(\int_b^t
      \frac{\left(x^{2\alpha+1}-b^{2\alpha+1}\right)^2}{x^{2(\alpha+\beta)}}
      \, dx
    \right)^{1/2} \!\! dt,
  \end{equation*}
  and similarly for $I_2$, $I_3$ and $I_4$. 
  Now, as soon as $\alpha \geq 0$, $0 \leq \beta < 3/2 + \alpha$, 
  $0 \leq \xp < 3$ and $\xp \leq 2\beta$, some calculations
  show that there exists constants $C_1, C_2, C_3, C_4 > 0$ such that 
  for any $b > 0$ and $y$ with $b \leq y \leq b+1$
  \begin{equation*}
    J_n(b,y)^2 \leq C_n, \quad n = 1,2,3,4.
  \end{equation*}

  \emph{Case 2}. Choose $b > 1$, $u \in \qdom{h_b}$ that satisfies 
  \eqref{eq:cond1} and \eqref{eq:cond2}, and $y \in
  [b,b+1]$. Set $v(x) = u(x) / x^\beta$. By \eqref{eq:cond1}
  and \eqref{eq:cond2}, there are points $x_1,x_2 \in
  [b,b+1]$ such that
  \begin{equation*}
    v(x_1) = 0
    \quad \text{ and } \quad
    v'(x_2) = 0.
  \end{equation*}
  Let us start by showing that 
  \begin{equation} \label{eq:vprime}
    \left| \frac{v'(t)}{t^\alpha} \right|^2
    \leq h_b[u] \frac{1}{b^{2(\alpha+\beta)}},
    \quad b \leq t \leq b+1.
  \end{equation}
  Choose $t$ with $b \leq t \leq b+1$. Assume first that $b \geq
  x_2$. 
  Note that
  \begin{equation*}
    \frac{v'(t)}{t^\alpha} = \int_{x_2}^t \left( 
      \frac{v'(x)}{x^\alpha}
    \right)' x^{\alpha+\beta} \cdot \frac{1}{x^{\alpha+\beta}} \, dx.
  \end{equation*}
  Hence,
  \begin{equation*}
    \left| \frac{v'(t)}{t^\alpha} \right|^2
    \leq \int_{x_2}^t \left|
      \left(\frac{v'(t)}{x^\alpha}\right)'x^{\alpha+\beta}
    \right|^2 dx 
    \cdot \int_{x_2}^t \frac{1}{x^{2(\alpha+\beta)}} \, dx
    \leq h_b[u] \cdot \frac{1}{b^{2(\alpha+\beta)}}
  \end{equation*}
  The case when $b \leq x_2$ is handled in a similar way. This
  concludes the proof of equation \eqref{eq:vprime}.

  Now, assume that $y \geq x_1$. It follows from \eqref{eq:vprime} that 
  \begin{align*}
    \left|v(y)\right|^2
    & = \left| \int_{x_1}^y \frac{v'(t)}{t^\alpha} \cdot t^\alpha \, dt
    \right|^2 \\
    & \leq \int_{x_1}^y \left|\frac{v'(t)}{t^\alpha}\right|^2 \, dt
    \cdot \int_{x_1}^y t^{2\alpha} \, dt \\
    & \leq h_b[u] \cdot \frac{1}{b^{2(\alpha+\beta)}} 
    \cdot (b+1)^{2\alpha}.
  \end{align*}
  Therefore, since $b > 1$
  \begin{equation*}
    \frac{u(y)}{y^{\xp}} 
    \leq \frac{1}{y^{\xp}} 
    \cdot \frac{(b+1)^{2(\alpha+\beta)}}{b^{2(\alpha+\beta)}}
    \cdot h_b[u]
    \leq 2^{2(\alpha+\beta)} \cdot h_b[u].
  \end{equation*}
  By similar arguments, this inequality also holds when $y \leq x_1$.
\end{proof}

\begin{prop} \label{prop:interval}
  Let $0 \leq \xp < 3$ and $\xp \leq 2\beta$.
  There are constants $D_\xp, E_\xp > 0$ such that for any $b > 0$ and any
  non-negative bounded potential $V \neq 0$ that satisfies
  \begin{equation} \label{eq:intV}
    \int_b^{b+1} V(x) x^{\xp} \, dx \leq D_\xp,
  \end{equation}
  the operator $H_b - V$ has negative spectrum consisting of 
  exactly two eigenvalues, $-E_1$ and $-E_2$, that satisfy
  \begin{equation*}
    E_1 \leq E_\xp \quad \text{ and } \quad
    E_2 \leq E_\xp.
  \end{equation*}
\end{prop}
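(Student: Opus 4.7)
The plan is to establish three claims: (a) $H_b - V$ has at least two negative eigenvalues; (b) $H_b - V$ has at most two negative eigenvalues; and (c) both negative eigenvalues $\lambda$ satisfy the uniform bound $|\lambda| \leq E_\xp$.

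For (a), I would take $\linspan\{f_1, f_2\}$ as a two-dimensional test subspace. Since $H_b f_1 = H_b f_2 = 0$, one has $h_b[c_1 f_1 + c_2 f_2] = 0$ for all scalars; combined with $V \geq 0$ and $V \not\equiv 0$, this makes the shifted form $h_b[u] - \int V|u|^2 \, dx$ strictly negative on every nonzero element of $\linspan\{f_1, f_2\}$, so the min-max principle yields at least two negative eigenvalues.

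For (b), I would argue via a Glazman-type formulation of the min-max principle: it suffices to show that every three-dimensional subspace $F \subset \qdom{h_b}$ contains a nonzero $u$ with $h_b[u] - \int V|u|^2 \, dx \geq 0$. By Lemma \ref{lem:conddescr}(iii), such an $F$ contains a nonzero $u$ satisfying both \eqref{eq:cond1} and \eqref{eq:cond2}; Lemma \ref{lem:supu} then gives $|u(y)|^2 \leq C_\xp \, y^\xp \, h_b[u]$, and hence
\begin{equation*}
\int_b^{b+1} V|u|^2 \, dx \leq \left(\sup_{y \in [b,b+1]} \frac{|u(y)|^2}{y^\xp}\right) \int_b^{b+1} V(x) x^\xp \, dx \leq C_\xp D_\xp h_b[u].
\end{equation*}
Choosing $D_\xp$ so small that $C_\xp D_\xp \leq 1$ closes this step.

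For (c), the goal is the form inequality $h_b[u] - \int V|u|^2 \, dx \geq -E_\xp \|u\|^2$ for all $u \in \qdom{h_b}$, with $E_\xp$ independent of $b$ and $V$. I would decompose $u = c_1 f_1 + c_2 f_2 + w$ via Lemma \ref{lem:conddescr}(ii), so that $w$ satisfies \eqref{eq:cond1} and \eqref{eq:cond2} and $h_b[u] = h_b[w]$. An $\epsilon$-inequality applied to $\int V|u|^2 \, dx$ separates the $w$-contribution, which is absorbed by $h_b[u]$ as in step (b), from the finite-dimensional piece $\int V|c_1 f_1 + c_2 f_2|^2 \, dx$. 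The hypothesis $\xp \leq 2\beta$ (which also implies $2(\alpha+\beta+1) - \xp \geq 2$) gives the pointwise bounds $f_i(x)^2 \leq (b+1)^{2e_i - \xp} x^\xp$ with $e_1 = \beta$ and $e_2 = \alpha+\beta+1$, so $\int V f_i^2 \, dx \leq (b+1)^{2e_i - \xp} D_\xp$. The main technical obstacle will be relating the coefficients $|c_i|^2$ to $\|u\|^2$ uniformly in $b$: this amounts to controlling the smallest eigenvalue of the Gram matrix of $\{f_1, f_2\}$ in $L^2(b, b+1)$, and checking that the ratios $(b+1)^{2e_i - \xp}/\|f_i\|^2_{L^2(b,b+1)}$ stay bounded, which is verified by inspecting the regimes $b \leq 1$ and $b \geq 1$ separately. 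Assembling these estimates yields the uniform constant $E_\xp$.
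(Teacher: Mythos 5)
Your steps (a) and (b) coincide with the paper's Steps 3 and 2: the test space $\linspan\{f_1,f_2\}$, on which $h_b$ vanishes, forces at least two negative eigenvalues, and Lemma \ref{lem:conddescr}\,(iii) combined with Lemma \ref{lem:supu} and Glazman's lemma caps the count at two once $D_\xp$ is chosen with $C_\xp D_\xp\le 1$. The gap is in step (c). Your plan reduces the uniform lower bound to the estimate $|c_1|^2\|f_1\|^2+|c_2|^2\|f_2\|^2\le \Lambda\,\|c_1f_1+c_2f_2\|^2$ in $\ltwob$ with $\Lambda$ independent of $b$, i.e.\ to a uniform lower bound on the smallest eigenvalue of the normalized Gram matrix of $\{f_1,f_2\}$. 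This fails as $b\to\infty$: on $[b,b+1]$ both $x^{\beta}$ and $x^{\alpha+\beta+1}$, after $L^2$-normalization, converge to the constant function, and the Gram determinant identity gives $1-\rho_b^2\sim (\alpha+1)^2/(12\,b^2)$, where $\rho_b$ denotes the cosine of the angle between $f_1$ and $f_2$ in $\ltwob$. Tracking this through your argument, the resulting bound for $\sup_y |w_0(y)|^2/y^{\xp}$ in terms of $\|w_0\|^2$ on $\linspan\{f_1,f_2\}$ carries a factor of order $b^{2-\xp}$, which is unbounded for $\xp<2$; in particular the case $\xp=0$, which the proof of Proposition \ref{prop:interval} itself needs (the term $C_0E_\xp$), is lost. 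So the claim that the obstacle ``is verified by inspecting the regimes $b\le 1$ and $b\ge 1$ separately'' is where the argument breaks.

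The comparison you need is nevertheless true, and the repair is exactly what the paper does: orthogonalize first, replacing $f_2$ by $g_b=f_2-\left(\int_b^{b+1}f_1f_2\,dx\,/\int_b^{b+1}|f_1|^2\,dx\right)f_1$, so that $\|c_1f_1+c_2g_b\|^2=|c_1|^2\|f_1\|^2+|c_2|^2\|g_b\|^2$ exactly, and then bound $\inf_{b>0}\|f_1\|^2/\sup_y\bigl(|f_1(y)|^2/y^{\xp}\bigr)$ and the analogous quantity for $g_b$ from below (the paper's constants $B_1,B_2$). For large $b$ the function $g_b$ is essentially a multiple of $x-x_b$ on $[b,b+1]$, for which the $L^2$-to-sup ratio is bounded below, and for small $b$ the hypothesis $\xp\le 2\beta$ keeps $\sup_y|g_b(y)|^2/y^{\xp}$ finite. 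With that replacement, the remainder of your step (c) --- the $\epsilon$-splitting of $\int V|u|^2\,dx$, the absorption of the part satisfying \eqref{eq:cond1}--\eqref{eq:cond2} into $h_b[u]$ via Lemma \ref{lem:supu} (applied with both $\xp$ and $0$), and the bound $\int V|w_0|^2\,dx\le D_\xp\sup_y|w_0(y)|^2/y^{\xp}$ --- goes through as in the paper's Step 1.
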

\begin{proof}
  For given $b > 0$, let
  \begin{equation*}
    g_b = f_2 - \frac{\int_b^{b+1}f_1 f_2 \, dx}{\int_b^{b+1}
      |f_1|^2 \, dx} \cdot f_1,
  \end{equation*}
  so that $f_1$ and $g_b$ are orthogonal in $\ltwob$ and 
  span the same subspace as $f_1$ and $f_2$.
  Introduce 
  \begin{equation*}
    B_1 = \frac 1 4 \inf_{b > 0} \left(
      \frac{\int_b^{b+1} |f_1(x)|^2 \, dx}
      {\sup\limits_{b \leq y \leq b+1} \frac{|f_1(y)|^2}{y^{\xp}}}
    \right)
    \quad \text{ and } \quad
    B_2 = \frac 1 4 \inf_{b > 0} \left(
      \frac{\int_b^{b+1} |g_b(x)|^2 \, dx}
      {\sup\limits_{b \leq y \leq b+1} \frac{|g_b(y)|^2}{y^{\xp}}}
    \right).
  \end{equation*}
  Elementary calculations show that $B_1 > 0$ and 
  $B_2 > 0$. 
  Now set
  \begin{equation*}
    B = \min\left\{B_1, B_2 \right\}.
  \end{equation*}
  Let $C_\xp > 0$ be as in Lemma \ref{lem:supu}, and
  \begin{equation*}
    D_\xp = \min\left\{
      \frac{B}{2C_\xp(1 + B)},
      \frac{B}{2C_0(1 + B)},
      \frac{1}{C_\xp}
    \right\}
    \quad \text{ and } \quad
    E_\xp = \frac{1}{C_0(1+B)}.
  \end{equation*}
  Note that this in particular implies
  \begin{equation} \label{eq:CDleq}
    C_\xp D_\xp \leq 1,
  \end{equation}
  \begin{equation} \label{eq:DpEleq}
    C_0 E_\xp + 2C_\xp D_\xp \leq 1
  \end{equation}
  and
  \begin{equation} \label{eq:BEgeq2D}
    B E_\xp \geq 2 D_\xp.
  \end{equation}
  Choose $b > 0$ and a non-negative $V \neq 0$ 
  that satisfies condition \eqref{eq:intV}.

  \emph{Step 1}.
  Let us start by showing that $H_b - V \geq -E_\xp$, in the sense of
  quadratic forms. Choose $v \in \qdom{h_b}$. By (\ref{it:decompcond}) 
  of Lemma \ref{lem:conddescr} there are constants $c_1,c_2 \in \complex$ 
  and a function $u \in \qdom{h_b}$ for which \eqref{eq:cond1} and
  \eqref{eq:cond2} hold, such that
  \begin{equation*}
    v = c_1f_1 + c_2 g_b + u.
  \end{equation*}
  Write
  $w = c_1f_1 + c_2 g_b$ and
  observe that 
  \begin{align*}
    \frac{E_\xp}{2} \int_b^{b+1} \!\!\!\! \left|w(x)\right|^2 \, dx 
    & = \frac{E_\xp}{2} \left(
      |c_1|^2 \int_b^{b+1} \!\!\!\! \left|f_1(x)\right|^2 \, dx
      + |c_2|^2 \int_b^{b+1} \!\!\!\! \left|g_b(x)\right|^2 \, dx
    \right) \\
    & \geq E_\xp \left(
      B_1 \cdot 2 |c_1|^2 \!\! 
      \sup_{b \leq y \leq b+1} \!\!\!\!\! \frac{|f_1(y)|^2}{y^{\xp}}
      + B_2 \cdot 2 |c_2|^2 \!\! 
      \sup_{b \leq y \leq b+1} \!\!\!\!\! \frac{|g_b(y)|^2}{y^{\xp}}
    \right) \\
    & \geq B E_\xp \cdot
      \sup_{b \leq y \leq b+1} \!\! \frac{|w(y)|^2 }{y^{\xp}}.
  \end{align*}
  In other words, using \eqref{eq:BEgeq2D},
  \begin{equation} \label{eq:intw}
    \frac{E_\xp}{2} \int_b^{b+1} \!\!\!\! \left|w(x)\right|^2 \, dx 
    - 2 D_\xp \!\! \sup_{b \leq y \leq b+1} \!\! 
    \frac{\left|w(y)\right|^2}{y^{\xp}} \geq 0.
  \end{equation}

  Consider the quadratic expression
  \begin{equation*}
    g[v] = h_b[v] - \int_b^{b+1} \!\!\!\! V(x) |v(x)|^2 \, dx 
    + E_\xp \int_b^{b+1}|v(x)|^2 \, dx.
  \end{equation*}
  We have to prove that $g[v] \geq 0$. Note that $h_b[v] = h_b[u]$.
  Then use the
  facts that $|a + b|^2 \leq 2|a|^2 + 2|b|^2$ and 
  $|a + b|^2 \geq \frac{1}{2}|a|^2 - |b|^2$, for any $a,b \in
  \complex$, together with
  \eqref{eq:intV}, \eqref{eq:intw} and Lemma \ref{lem:supu}, to get that
  \begin{align*}
    g[v]
    & \geq h_b[u] - \int_b^{b+1} \!\!\!\! 
    \left(E_\xp + 2V(x)\right) |u(x)|^2 \, dx
    + \int_b^{b+1} \!\! \left(\frac{E_\xp}{2} - 2V(x)\right)
    \left|w(x)\right|^2 \, dx \\
    & \geq h_b[u] - 
    \!\! \sup_{b \leq y \leq b+1} \!\! |u(y)|^2 \cdot E_\xp
    -2 \!\! \sup_{b \leq y \leq b+1} \!\! \frac{|u(y)|^2}{y^\xp} 
    \cdot D_\xp \\
    & \phantom{\geq \,}
    + \frac{E_\xp}{2} \int_b^{b+1} \!\!\!\! \left|w(x)\right|^2 \, dx 
    - 2 D_\xp \!\! \sup_{b \leq y \leq b+1} \!\! 
    \frac{\left|w(y)\right|^2}{y^{\xp}}
    \\
    & \geq h_b[u] \left(1 - (C_0 E_\xp + 2C_\xp D_\xp)\right)
    \geq 0.
  \end{align*}
  
  \emph{Step 2}. Continue by showing that the negative spectrum of
  $H_b - V$ is discrete and consists of at most two
  eigenvalues. Let $E$ be the spectral measure corresponding
  to $H_b - V$, and 
  denote by $N_-(H_b - V)$ the rank of $E(-\infty,0)$.
  From Glazman's lemma, see Remark \ref{rem:glazman}, it is known that
  \begin{equation} \label{eq:glazman}
    N_-(H_b - V) = \sup_F \, \dim F,
  \end{equation}
  where the supremum is taken over all linear subsets $F \subset
  \qdom{h_b}$ such that
  \begin{equation*}
    h_b[f] - \int_b^{b+1}V|f|^2 \, dx < 0, 
  \end{equation*}
  for any $f \in F$ with $f \neq 0$. Suppose that $F \subset
  \qdom{h_b}$ is a linear set that satisfies $\dim F \geq 3$. By 
  (\ref{it:dimcond}) of Lemma \ref{lem:conddescr} we find a $u \in F$ 
  with $u \neq 0$
  that satisfies \eqref{eq:cond1} and \eqref{eq:cond2}. 
  It follows from Lemma \ref{lem:supu}, \eqref{eq:intV}
  and \eqref{eq:CDleq} that
  \begin{align*}
    h_b[u] - \int_b^{b+1}V(x)|u(x)|^2 \, dx
    & \geq h_b[u] - \sup_{b \leq y \leq b+1}\frac{|u(y)|^2}{y^{\xp}}
    \int_b^{b+1} V(x) x^{\xp} \, dx \\
    & \geq h_b[u] - C_\xp D_\xp h_b[u] \\
    & \geq 0.
  \end{align*}
  Using \eqref{eq:glazman}, this shows that $N_-(H_b - V) \leq 2$.

  \emph{Step 3}. The final step is to provide the existence of at
  least two negative eigenvalues of $H_b - V$. Let $F =
  \linspan\left\{f_1,f_2\right\}$. Clearly $h_b[f] = 0$ for any $f \in
  F$. In particular
  \begin{equation*}
    h_b[f] - \int_b^{b+1} V|f|^2 \, dx < 0, 
  \end{equation*}
  for $f \in F$ with $f \neq 0$.
  Again using \eqref{eq:glazman}, it is seen that $N_-(H_b - V) \geq 2$.
\end{proof}

\begin{rem}[Glazman's lemma] \label{rem:glazman}
  Much of the variational techniques used here origin in
  \emph{Glazman's lemma}. This lemma, discussed in e.g.
  \cite{birman-solomyak-paper} states the following:

  Let $\hilbert$ be a separable Hilbert space, and $D$ a dense, linear
  subset of $\hilbert$. Suppose that the semi-bounded quadratic form $a$ is
  closable on $D$, and denote by $A$ the self-adjoint operator
  corresponding to the closure of $a$. Also let $E$ be the spectral
  measure corresponding to $A$. Then
  \begin{equation*}
    \rank E(-\infty, x) = \sup \dim F,
  \end{equation*}
  where the supremum if taken over all linear subsets $F \subset D$
  such that
  \begin{equation*}
    a[f] < x \|f\|^2,
    \quad f \in F, f \neq 0.
  \end{equation*}
\end{rem}

\section{Well-Behaved Potentials}
In this section we restrict ourselves to bounded potentials $V$ of compact
support. One obvious reason for this is to avoid difficulties in
defining operators
\begin{equation*}
 H = H_0 - V, 
 \quad \text{ where } \quad
 H_0 = (-\Delta)^2 - \hardyconst{d}{2}\frac{1}{|x|^4}.
\end{equation*}
Indeed, when $V$ is bounded, this operator is 
simply defined as an operator sum with domain
$\dom{H} = \dom{H_0}$.

\subsection{General Half-Line Case}
We start by proving the general half-line result that follows from 
Proposition \ref{prop:interval}. The proof uses techniques from
\cite{weidl} and \cite{efjems}.

\begin{prop} \label{prop:niceV-general}
  Let $a \geq 0$, $\beta \geq 0$ and $0 \leq \xp < 3$, $\xp \leq 2\beta$. 
  Define the quadratic form $h_0$ as the closure of
  \begin{equation*}
    u \mapsto
    \int_0^\infty \left|
      \frac{d}{dx} \left(
        \frac{1}{x^\alpha}
        \frac{d}{dx} \left(\frac{u(x)}{x^\beta}\right)
        \right)
    \right|^2 x^{2(\alpha+\beta)} \, dx,
  \end{equation*}
  on $\cz$.
  Let $H_0$ be the self-adjoint operator in $\ltwo$
  corresponding to $h_0$.
  Then there is a
  constant $C = C(\alpha,\beta,\xp) > 0$ such that 
  for any non-negative bounded potential $V$ with compact support 
  in $(0,\infty)$, the negative spectrum of $H_0-V$ is discrete and
  \begin{equation*}
    \tr (H_0 - V)_-^{\frac{3-\xp}{4}} \leq C \int_0^\infty V(x)
    x^{\xp} \, dx.
  \end{equation*}
\end{prop}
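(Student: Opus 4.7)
The plan is to reduce the half-line estimate to the local bound of Proposition~\ref{prop:interval} by decomposing $(0,\infty)$ into an adaptive family of subintervals and using Glazman bracketing together with the scaling invariance of $h_0$.

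\emph{Scaling.} For any $a \geq 0$ and $L > 0$ set $b = a/L$. The substitution $\tilde u(y) = u(Ly)$ is a bijection $H^2(a, a+L) \to H^2(b,b+1)$, and a direct calculation gives
\begin{equation*}
\int_a^{a+L}\! \left| \left( \tfrac{1}{x^\alpha} \left( \tfrac{u(x)}{x^\beta} \right)' \right)' \right|^2 \! x^{2(\alpha+\beta)}\, dx = L^{-3} h_b[\tilde u],
\end{equation*}
together with $\|u\|^2_{L^2(a,a+L)} = L \|\tilde u\|^2_{L^2(b,b+1)}$ and $\int_a^{a+L} V|u|^2\, dx = L \int_b^{b+1} V(Ly) |\tilde u|^2\, dy$. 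Hence, denoting by $H_I$ the self-adjoint operator on $L^2(I)$ associated with the restriction of $h_0$ to $I = (a,a+L)$, the negative eigenvalues of $H_I - V$ are exactly $L^{-4}$ times those of $H_b - W$ with $W(y) = L^4 V(Ly)$, and the hypothesis of Proposition~\ref{prop:interval} for $W$ reads $L^{3-\xp}\int_a^{a+L} V(x) x^\xp\, dx \leq D_\xp$.

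\emph{Adaptive partition and bracketing.} Choose $0 < a_0 < r$ with $\supp V \subset [a_0, r]$. Inductively define $a_{k+1}$ as the unique value such that $L_k = a_{k+1} - a_k$ satisfies $L_k^{3-\xp} \int_{a_k}^{a_{k+1}} V(x) x^\xp\, dx = D_\xp$; existence follows from the intermediate value theorem as long as $V$ has mass to the right of $a_k$, and since $3-\xp > 0$ and $V$ is bounded, the process halts after $N$ steps producing intervals $I_0, \dots, I_{N-1}$ covering $[a_0, r]$. Augment with $I_{-1} = (0,a_0)$ and $I_N = [a_N,\infty)$, on which $V$ vanishes. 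Restriction $u \mapsto (u|_{I_k})_k$ embeds $\cz$ into $\bigoplus_{k=-1}^{N} H^2(I_k)$ preserving $h_0 - V$ additively, so Glazman's lemma (Remark~\ref{rem:glazman}) and integration of the resulting counting-function inequality in $t>0$ yield
\begin{equation*}
\tr(H_0 - V)_-^{(3-\xp)/4} \leq \sum_{k=-1}^{N} \tr(H_{I_k} - V)_-^{(3-\xp)/4},
\end{equation*}
where the boundary terms for $I_{-1}$ and $I_N$ vanish since the local form is nonnegative there.

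\emph{Local estimate.} On each interior $I_k$, the rescaled potential $W_k(y) = L_k^4 V(L_k y)$ satisfies $\int_{b_k}^{b_k+1} W_k(y) y^\xp\, dy = D_\xp$, so Proposition~\ref{prop:interval} gives at most two negative eigenvalues of $H_{b_k} - W_k$, each of magnitude at most $E_\xp$. Undoing the scaling gives at most two negative eigenvalues of $H_{I_k} - V$ of magnitude at most $L_k^{-4} E_\xp$, whence
\begin{equation*}
\tr(H_{I_k} - V)_-^{(3-\xp)/4} \leq 2 E_\xp^{(3-\xp)/4} L_k^{-(3-\xp)} = \frac{2 E_\xp^{(3-\xp)/4}}{D_\xp} \int_{I_k} V(x) x^\xp\, dx.
\end{equation*}
Summing over $k = 0, \dots, N-1$ produces the claim with $C = 2 E_\xp^{(3-\xp)/4}/D_\xp$.

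The principal obstacle is the bracketing step: one must check that restriction from $\cz$ (which encodes the Dirichlet condition at $0$) into the direct sum of unconstrained $H^2$-spaces is compatible with Glazman's variational characterization on both sides, and that the boundary intervals $I_{-1}$ and $I_N$ genuinely contribute no negative spectrum. Both points are handled by the nonnegativity of $h_0$ on any subinterval disjoint from $\supp V$, so the technical part is routine once this is noted.
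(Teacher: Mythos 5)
Your proposal is correct and follows essentially the same route as the paper: the same adaptive partition defined by $L_k^{3-\xp}\int_{I_k}V(x)x^{\xp}\,dx = D_\xp$, the same Neumann-type bracketing with the two potential-free boundary intervals contributing nothing, the same scaling reduction to Proposition~\ref{prop:interval}, and even the same final constant $2E_\xp^{(3-\xp)/4}/D_\xp$. The only cosmetic difference is that the paper justifies termination of the recursion via the explicit lower bound $a_{j+1}-a_j \geq (D_\xp/\int_0^\infty V x^{\xp}\,dx)^{1/(3-\xp)}$ rather than boundedness of $V$, but your argument yields the same bound.
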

\begin{proof}
  Let $D_\xp$ and $E_\xp$ be as in Proposition \ref{prop:interval}. 
  Choose a bounded $V \geq 0$ with $\supp V \Subset (0,\infty)$.
  Define a sequence of numbers $a_1 < a_2 < \cdots$ by
  setting $a_1 = \min \, \supp V > 0$ and recursively,
  \begin{equation} \label{eq:ajdef}
    (a_{j+1} - a_j)^{3-\xp} \int_{a_j}^{a_{j+1}} V(x) x^{\xp} \, dx
    = D_\xp,
  \end{equation}
  for $j \geq 2$. The recursion stops for $j = n$, when 
  $a_n \geq \max \, \supp V$. 
  Indeed, the sequence is finite, since 
  \begin{equation*}
    a_{j+1} - a_j \geq \left(\frac{D_\xp}
      {\int_{0}^{\infty} V(x) x^{\xp} \, dx}\right)^{\frac{1}{3-\xp}}
  \end{equation*}
  for any $j$. Let $a_0 = 0$ and $a_{n+1} = \infty$.

  For $0 \leq j \leq n$, consider the quadratic forms 
  \begin{equation*}
    g_j[v] = \int_{a_j}^{a_{j+1}} \left|
      \frac{d}{dx} \left( \frac{1}{x^\alpha} \frac{d}{dx} 
        \left(\frac{v(x)}{x^\beta} \right)
      \right) \right|^2 x^{2(\alpha+\beta)} \, dx.
  \end{equation*}
  For $1 \leq j \leq n$, the domain of $g_j$ is $\qdom{g_j} =
  \sobspace 2 {a_j,a_{j+1}}$. For $j = 0$, it can be shown that $g_0$
  is closable on $\czeroinf{(0,a_1]}$, and we consider it as the
  closure on this domain.
  In this way all the forms $g_j$ are closed.
  Let $G_j$ be the self-adjoint operator in $\lspace 2 {a_j, a_{j+1}}$
  corresponding to $g_j$.

  By comparing quadratic forms and their domains of
  definition, we get that
  \begin{equation*}
    H_0-V \geq \bigoplus_{j=0}^{n} (G_j - V).
  \end{equation*}
  Note that $G_j - V \geq 0$ for $j = 0$ and for $j = n$, since $V = 0$ on
  $(a_j,a_{j+1})$ for $j=0,n$. Therefore, if we can prove that the
  negative spectrum of $G_j - V$ is discrete for $j=1,2,\ldots,n-1$,
  it follows that the negative spectrum of $H_0-V$ is also discrete. In
  this case,
  \begin{equation} \label{eq:trleq}
    \tr (H_0-V)_-^{\frac{3-\xp}{4}} 
    \leq \sum_{j=1}^{n-1} \tr (G_j-V)_-^{\frac{3-\xp}{4}}.
  \end{equation}

  Let $1 \leq j \leq n-1$ and define a unitary transformation
  $\utrans$ from $\lspace 2 {a_j,a_{j+1}}$ onto $\lspace 2 {b_j,b_j + 1}$ by
  \begin{equation*}
    (\utrans v)(x) = (a_{j+1}-a_j)^{1/2} v\left((a_{j+1}-a_j)x\right),
  \end{equation*}
  where $b_j = a_j / (a_{j+1} - a_j)$. Clearly,
  \begin{equation*}
    G_j - V = \frac{1}{\left(a_{j+1}-a_j\right)^4} 
    \utrans^{-1}\left(H_{b_j} - V_j\right)\utrans,
  \end{equation*}
  where $H_b$ is the operator discussed in Section \ref{sec:interval}
  and where
  \begin{equation*}
    V_j(x) = (a_{j+1}-a_j)^4 V\left((a_{j+1}-a_j)x \right).
  \end{equation*}
  Note that by \eqref{eq:ajdef},
  \begin{equation*}
    \int_{b_j}^{b_j+1} V_j(x) x^{\xp} \, dx = D_\xp.
  \end{equation*}
  Hence by Proposition \ref{prop:interval}, the negative spectrum of 
  $H_{b_j} - V_j$, an therefore also of $G_j - V$, is discrete. 
  By the same proposition and \eqref{eq:ajdef},
  \begin{align*}
    \tr (G_j - V)_-^{\frac{3-\xp}{4}}
    & = \frac{1}{(a_{j+1}-a_j)^{3-\xp}} 
    \tr \left(H_{b_j} - V_j\right)_-^{\frac{3-\xp}{4}} \\
    & \leq \frac{2}{(a_{j+1}-a_j)^{3-\xp}} E_\xp^{\frac{3-\xp}{4}} \\
    & = \frac{2E_\xp^{\frac{3-\xp}{4}}}{D_\xp} 
    \int_{a_j}^{a_{j+1}} V(x) x^{\xp} \, dx.
  \end{align*}
  We have thus shown that the negative spectrum of $H_0-V$ is
  discrete, and by \eqref{eq:trleq}, that
  \begin{equation*}
    \tr(H_0-V)_-^{\frac{3-\xp}{4}} 
    \leq \frac{2E_\xp^{\frac{3-\xp}{4}}}{D_\xp} 
    \int_0^\infty V(x) x^{\xp} \, dx.
    \qedhere
  \end{equation*}
\end{proof}

By standard methods, as described in \cite{hundertmark}, and originally in
\cite{aizenman-lieb}, this result extends to an inequality for
$\tr(H_0-V)_-^\gamma$ for any $\gamma \geq \gc$, where
\begin{equation*}
  \gc = \frac{3 - \xp}{4}.
\end{equation*}

\begin{cor} \label{cor:niceV-general}
  Let $\alpha, \beta, \xp$ and $H_0$ be as in Proposition
  \ref{prop:niceV-general}. Then, for
  any $\gamma \geq \gc$, there is a 
  $C = C(\alpha, \beta, \xp, \gamma) > 0$ such that for any
  non-negative, bounded potential $V$ with compact support in
  $(0,\infty)$,
  \begin{equation*}
    \tr(H_0-V)_-^\gamma \leq C \int_0^\infty V(x)^{1+\gamma-\gc}
    x^{\xp} \, dx.
  \end{equation*}
\end{cor}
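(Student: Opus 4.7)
The plan is to derive Corollary \ref{cor:niceV-general} from the critical-exponent bound of Proposition \ref{prop:niceV-general} by the classical Aizenman--Lieb lifting argument \cite{aizenman-lieb}, in the form laid out in \cite{hundertmark}. The case $\gamma = \gc$ is Proposition \ref{prop:niceV-general} itself, so only the range $\gamma > \gc$ requires attention. Discreteness of the negative spectrum of $H_0 - V$ is already asserted in Proposition \ref{prop:niceV-general}, so only the trace estimate needs a new argument.

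The starting point is the elementary scalar identity
\begin{equation*}
  s_-^\gamma = \frac{1}{B(\gamma-\gc,\gc+1)}\int_0^\infty (s+t)_-^{\gc}\,t^{\gamma-\gc-1}\,dt,
  \qquad s \in \reals,
\end{equation*}
valid for $\gamma > \gc$, where $B$ denotes the Beta function. Applying this identity to each negative eigenvalue $\lambda_j$ of $H_0 - V$ and summing, Tonelli's theorem yields
\begin{equation*}
  \tr(H_0 - V)_-^\gamma = \frac{1}{B(\gamma-\gc,\gc+1)}\int_0^\infty \tr\bigl(H_0 - (V-t)\bigr)_-^{\gc}\,t^{\gamma-\gc-1}\,dt,
\end{equation*}
after rewriting $H_0 - V + t = H_0 - (V-t)$.

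Since Proposition \ref{prop:niceV-general} requires a non-negative potential but $V - t$ may become negative, I would next bound the integrand by passing to the positive part. From $(V-t)_+ \geq V-t$ pointwise and the min-max principle one gets $\tr(H_0 - (V-t))_-^{\gc} \leq \tr(H_0 - (V-t)_+)_-^{\gc}$; and since $(V-t)_+$ remains non-negative, bounded and compactly supported, Proposition \ref{prop:niceV-general} gives $\tr(H_0-(V-t)_+)_-^{\gc} \leq C\int_0^\infty (V(x)-t)_+\, x^{\xp}\, dx$.

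Substituting this bound, swapping the order of integration via Tonelli, and evaluating the resulting inner integral as a Beta integral,
\begin{equation*}
  \int_0^{V(x)} (V(x)-t)\,t^{\gamma-\gc-1}\,dt = \frac{V(x)^{1+\gamma-\gc}}{(\gamma-\gc)(\gamma-\gc+1)},
\end{equation*}
produces the stated inequality after collecting all numerical constants into a single $C = C(\alpha,\beta,\xp,\gamma)$. The only genuinely delicate step is the replacement of $V - t$ by $(V-t)_+$ needed to invoke Proposition \ref{prop:niceV-general}; the remainder of the argument is purely computational and introduces no further spectral-theoretic input.
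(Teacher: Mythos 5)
Your proposal is correct and is exactly the argument the paper intends: the paper disposes of this corollary by citing the Aizenman--Lieb lifting procedure (via \cite{hundertmark} and \cite{aizenman-lieb}), and you have simply written out that standard argument in full, including the one genuinely necessary care point of replacing $V-t$ by $(V-t)_+$ before invoking Proposition \ref{prop:niceV-general}. Nothing further is needed.
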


\subsection{The Fourth-Order Operator on the Half-Line}
We now consider the fourth-order operator 
$d^4/dx^4 - \hardyconst{1}{2} / x^4 - V(x)$ 
in $\ltwo$, where $\hardyconst{1}{2} = 9/16$ is the sharp constant in the
classical Hardy-Rellich inequality
\begin{equation*}
  \int_0^\infty |u''(x)|^2 \, dx 
  \geq \hardyconst{1}{2} \int_0^\infty \frac{|u(x)|^2}{x^4} \, dx,
  \quad u \in \cz.
\end{equation*}

\begin{prop} \label{prop:niceV-1d}
  Let $0 \leq \xp < 3$ and consider the operator
  \begin{equation*}
    H_0 = \frac{d^4}{dx^4} - \hardyconst{1}{2} \frac{1}{x^4}
  \end{equation*}
  in $\ltwo$
  defined as the Friedrich extension of the corresponding operator
  initially defined on $\cz$.   
  Then, for any $\gamma \geq (3-\xp)/4$, there is a 
  $C = C(\xp, \gamma) > 0$ such that for any
  non-negative, bounded potential $V$ with compact support in
  $(0,\infty)$, the negative spectrum of $H_0 - V$ is discrete and
  \begin{equation*}
    \tr(H_0-V)_-^\gamma \leq C \int_0^\infty V(x)^{\gamma+\frac{1+\xp}{4}}
    x^{\xp} \, dx.
  \end{equation*}
\end{prop}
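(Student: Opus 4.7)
The plan is to deduce Proposition \ref{prop:niceV-1d} directly from Corollary \ref{cor:niceV-general} by choosing a single pair of parameters $\alpha \geq 0$, $\beta \geq 0$ with $\beta < 3/2 + \alpha$ and $2\beta \geq 3$ for which
\[
\int_0^\infty \left|\frac{d}{dx}\left(\frac{1}{x^\alpha}\frac{d}{dx}\left(\frac{u(x)}{x^\beta}\right)\right)\right|^2 x^{2(\alpha+\beta)} \, dx = \int_0^\infty \left(|u''(x)|^2 - \hardyconst{1}{2}\frac{|u(x)|^2}{x^4}\right) dx
\]
holds identically on $\cz$. Such an identity forces the Friedrichs extensions of the two quadratic forms from $\cz$ to agree, so that the operator $H_0$ of Proposition \ref{prop:niceV-1d} is literally the operator $H_0$ of Proposition \ref{prop:niceV-general} for this $(\alpha,\beta)$; the stated trace bound is then immediate from Corollary \ref{cor:niceV-general} after observing that $1 + \gamma - (3-\xp)/4 = \gamma + (1+\xp)/4$.

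To locate $(\alpha, \beta)$, I would first compute
\[
\frac{d}{dx}\left(\frac{1}{x^\alpha}\frac{d}{dx}(u/x^\beta)\right) x^{\alpha+\beta} = u'' - a\frac{u'}{x} + b\frac{u}{x^2},
\]
with $a = \alpha + 2\beta$ and $b = \beta(\alpha+\beta+1)$, square, and integrate by parts on $\cz$; all boundary terms vanish, and the left-hand side of the identity above reduces to
\[
\int_0^\infty |u''|^2 \, dx + (a^2-a-2b)\int_0^\infty \frac{|u'|^2}{x^2} \, dx + b(b+6-3a)\int_0^\infty \frac{|u|^2}{x^4} \, dx.
\]
Matching the target form forces $a^2 - a = 2b$ and $b(b+6-3a) = -\hardyconst{1}{2}$, which combine into the single equation $a(a-1)(a-3)(a-4) = -9/4$ for $a$. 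Solving it subject to the non-negativity constraints $\alpha, \beta \geq 0$ isolates the unique admissible pair $\beta = 3/2$, $\alpha = \sqrt{5/2} - 1$; one then checks $0 \leq \beta < 3/2 + \alpha$ and $2\beta = 3$, so every $\xp \in [0, 3)$ satisfies $\xp \leq 2\beta$ and all hypotheses of Proposition \ref{prop:niceV-general} and Corollary \ref{cor:niceV-general} are met.

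The main obstacle is precisely this algebraic existence step: one has to verify that the quartic $a(a-1)(a-3)(a-4) = -9/4$ admits a root whose decomposition into $\alpha + 2\beta$ and $\beta(\alpha+\beta+1)$ yields non-negative $\alpha, \beta$ compatible with the structural constraints of Section \ref{sec:interval}. The fit is in fact tight, with $2\beta$ exactly equal to $3$, which is what forces the assumption $\xp < 3$ in the statement. Beyond this, everything is formal: the integration by parts used in the expansion is routine on $\cz$, and the upgrade from the critical exponent $\gamma = (3-\xp)/4$ to general $\gamma \geq (3-\xp)/4$ is already built into Corollary \ref{cor:niceV-general}.
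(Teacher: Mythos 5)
Your proposal is correct and is essentially the paper's own proof: the authors likewise rewrite $h_0[u]$ on $\cz$ by partial integration as the general form of Proposition~\ref{prop:niceV-general} with $\beta = 3/2$ and $\alpha = (\sqrt{10}-2)/2 = \sqrt{5/2}-1$ (your value), and then invoke Corollary~\ref{cor:niceV-general}. Your algebra checks out — with $a=\alpha+2\beta$, $b=\beta(\alpha+\beta+1)$ the conditions $a(a-1)=2b$ and $b(b+6-3a)=-9/16$ reduce via $t=a^2-4a$ to $(t+3/2)^2=0$, yielding exactly the admissible pair above with $2\beta=3$ — so the only difference is that you supply the derivation of $(\alpha,\beta)$ that the paper leaves implicit.
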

\begin{proof}
  Note that since $\hardyconst{1}{2} = 9/16$, 
  the closed quadratic form $h_0$ corresponding to $H_0$ is the
  closure of 
  \begin{equation*}
    u \mapsto \int_0^\infty \left(
      |u''(x)|^2 - \frac{9}{16} \cdot \frac{|u(x)|^2}{x^4} 
    \right) \, dx
  \end{equation*}
  on $\cz$. By partial integration we see that for $u \in \cz$,
  \begin{equation*}
    h_0[u] = \int_0^\infty \left|
      \frac{d}{dx} \left(
        \frac{1}{x^\alpha}
        \frac{d}{dx} \left(\frac{u(x)}{x^\beta}\right)
        \right)
    \right|^2 x^{2(\alpha+\beta)} \, dx,
  \end{equation*}
  where
  \begin{equation*}
    \alpha = \frac{\sqrt{10} - 2}{2}
    \quad \text{ and } \quad
    \beta = \frac{3}{2}.
  \end{equation*}
  Hence the result follows from Corollary \ref{cor:niceV-general}.
\end{proof}

\subsection{The Fourth-Order Operator in Three Dimensions}
Let us turn to the operator $(-\Delta)^2 - \hardyconst{3}{2}/|x|^4 -
V(x)$ in $\lspace 2 {\reals^3}$. In $\rd 3$ we have the Hardy-Rellich
inequality
\begin{equation*}
  \int_{\rd 3} |\Delta u(x)|^2 \, dx 
  \geq \hardyconst{3}{2} \int_{\rd 3} \frac{|u(x)|^2}{|x|^4} \, dx,
  \quad u \in \czthree,
\end{equation*}
where the sharp constant $\hardyconst{3}{2}$ conveniently enough coincides
with $\hardyconst{1}{2} = 9/16$.

We denote by $\sphere$ and $\sigma$ the unit sphere and
two-dimensional surface measure in $\rd 3$, respectively.
Let $Y_n$, $n=0,1,2,\ldots$ be the normalized eigenfunctions of the
Laplace-Beltrami operator in $\lspace 2{\sphere,\sigma}$. The
eigenfunction $Y_n$ corresponds to the eigenvalue $n(n+1)$, and in
particular $Y_0$ is constant.
Consider the canonical isometric isomorphism 
\begin{equation*}
  \utrans : \lspace 2 {\rd 3} \to \bigoplus_{n=0}^\infty \ltwo
\end{equation*}
given by
\begin{equation*}
  \utrans u = \left\{u_n\right\}_{n=0}^\infty,
  \quad \text{ where } \quad
  u_n(r) = r \int_{\sphere} u(r\theta) Y_n(\theta) \, d\sigma(\theta)
\end{equation*}
for $u \in \czthree$. For such $u$, it is the case that $u_n \in \cz$ and 
\begin{equation} \label{eq:laplacedecomp}
  \utrans(-\Delta u) 
  = \left\{-u_n''(r) + n(n+1)\frac{u_n(r)}{r^2}\right\}_{n=0}^\infty.
\end{equation}

Finally, let $\hilbert = \bigoplus_{n=0}^\infty \ltwo$ and consider
the orthogonal decomposition $\hilbert = \hilbert_1 \oplus \hilbert_2$,
where
\begin{equation*}
  \hilbert_1 = \left\{
    \{u_n\}_{n=0}^\infty \st u_n = 0 \text{ for } n \geq 1
  \right\}
  \quad \text{ and } \quad
  \hilbert_2 = \left\{
    \{u_n\}_{n=0}^\infty \st u_0 = 0
  \right\}.
\end{equation*}
Let $P_1$ and $P_2$ be the orthogonal projections in $\hilbert$ onto
the subspaces $\hilbert_1$ and $\hilbert_2$, respectively. Depending
on context, we will sometimes identify $\hilbert_1$ with the space $\ltwo$.

\begin{lem} \label{lem:orthhardy}
  For any $u \in \cz$ and any $c > 0$,
  \begin{equation*}
    \int_0^\infty \left| -u''(x) + c\frac{u(x)}{x^2}\right|^2 \, dx
    \geq \left(c^2 - \frac 3 2 c + \hardyconst{1}{2}\right)
    \int_0^\infty \frac{|u(x)|^2}{x^4} \, dx.
  \end{equation*}
\end{lem}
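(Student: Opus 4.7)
The plan is to prove the inequality by a direct computation: expand the square, rewrite the cross term via integration by parts, and then apply two classical sharp Hardy-type inequalities to each of the resulting nonnegative quadratic forms. Since $u \in \cz$ all boundary terms at $0$ and $\infty$ vanish and we do not need to worry about regularity.

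First I would expand
\begin{equation*}
  \left|-u''(x) + c\frac{u(x)}{x^2}\right|^2
  = |u''(x)|^2 - 2c \, \frac{\realpart\bigl(u''(x)\overline{u(x)}\bigr)}{x^2} + c^2 \frac{|u(x)|^2}{x^4}.
\end{equation*}
Two integrations by parts (both boundary contributions disappear because $u$ has compact support in $(0,\infty)$) yield the identity
\begin{equation*}
  \realpart \int_0^\infty u''(x)\,\overline{u(x)}\,x^{-2}\,dx
  = -\int_0^\infty \frac{|u'(x)|^2}{x^2}\,dx + 3\int_0^\infty \frac{|u(x)|^2}{x^4}\,dx.
\end{equation*}
Substituting this in, the left-hand side of the lemma becomes
\begin{equation*}
  \int_0^\infty |u''|^2\,dx + 2c \int_0^\infty \frac{|u'|^2}{x^2}\,dx + (c^2 - 6c)\int_0^\infty \frac{|u|^2}{x^4}\,dx.
\end{equation*}

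Next I would bound the first two (manifestly nonnegative) integrals from below by multiples of $\int_0^\infty |u|^2/x^4\,dx$ using two sharp inequalities: the one-dimensional Hardy-Rellich inequality $\int_0^\infty |u''|^2\,dx \geq \hardyconst{1}{2} \int_0^\infty |u|^2/x^4 \, dx$ (recalled in the paragraph preceding Proposition~\ref{prop:niceV-1d}, with constant $9/16$), and the weighted Hardy inequality
\begin{equation*}
  \int_0^\infty \frac{|u'(x)|^2}{x^2}\,dx \geq \frac 9 4 \int_0^\infty \frac{|u(x)|^2}{x^4}\,dx.
\end{equation*}
The latter is the standard Hardy inequality with weight $x^{-2}$ on $u'$, whose sharp constant is $((-2-1)/2)^2 = 9/4$; it can be proved quickly by writing $u(x) = x v(x)$ or by an independent integration by parts. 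Combining the two bounds, the total coefficient of $\int |u|^2/x^4\,dx$ comes out to
\begin{equation*}
  \frac{9}{16} + 2c \cdot \frac 9 4 + (c^2 - 6c) = c^2 - \frac 3 2 c + \frac{9}{16},
\end{equation*}
which is precisely the claimed constant $c^2 - (3/2)c + \hardyconst{1}{2}$ (notice that this equals $(c-3/4)^2$, so the bound is automatically nonnegative).

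There is no substantial obstacle: every step is an elementary manipulation, and the only non-routine input is the weighted Hardy inequality with power $-2$, for which the constant $9/4$ must be verified to get the stated constant on the nose. An alternative, slightly slicker route is to apply Cauchy-Schwarz to $\langle -u'' + cu/x^2, u/x^2\rangle$ and use the same integration-by-parts identity together with the weighted Hardy inequality, but this only gives the sharp conclusion in the regime $c \geq 3/4$; the direct expansion above covers all $c > 0$ uniformly.
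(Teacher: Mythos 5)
Your proposal is correct and follows essentially the same route as the paper: expand the square, integrate by parts to convert the cross term into $2c\int|u'|^2/x^2 + (c^2-6c)\int|u|^2/x^4$, and then apply the one-dimensional Hardy--Rellich inequality with constant $9/16$ together with the weighted Hardy inequality $\int_0^\infty |u'|^2 x^{-2}\,dx \geq \tfrac{9}{4}\int_0^\infty |u|^2 x^{-4}\,dx$. The only cosmetic difference is that the paper deduces the latter from the classical Hardy inequality $\int|v'|^2\,dx \geq \tfrac14\int |v|^2 x^{-2}\,dx$ applied to $v = u/x$, which is equivalent to the substitution you suggest.
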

\begin{proof}
  Choose $u \in \cz$ and any $c > 0$.
  Recall the classical Hardy-Rellich inequalities
  \begin{equation} \label{eq:hardy1d}
    \int_0^\infty |u'(x)|^2 \, dx 
    \geq \frac 1 4 \int_0^\infty \frac{|u(x)|^2}{x^2} \, dx
  \end{equation}
  and
  \begin{equation} \label{eq:hardy3d}
    \int_0^\infty |u''(x)|^2 \, dx
    \geq \hardyconst{1}{2} \int_0^\infty \frac{|u(x)|^2}{x^4} \, dx.
  \end{equation}
  It follows from \eqref{eq:hardy1d} that
  \begin{equation} \label{eq:frachardy}
    \int_0^\infty \frac{|u'(x)|^2}{x^2} \, dx
    \geq \frac{9}{4} \int_0^\infty \frac{|u(x)|^2}{x^4} \, dx.
  \end{equation}
  Now, by partial integration,
  \begin{equation*}
    \int_0^\infty \left| -u''(x) + c\frac{u(x)}{x^2}\right|^2 \, dx
    = \int_0^\infty \!\! \left(
      |u''(x)|^2 + 2c \frac{|u'(x)|^2}{x^2} 
      + (c^2 \!\! - \!\! 6c) \frac{|u(x)|^2}{x^4}
    \right) \, dx
  \end{equation*}
  Combine this with \eqref{eq:hardy3d} and \eqref{eq:frachardy} to
  obtain the result.
\end{proof}

\begin{lem} \label{lem:3d-firstpart}
  Let $\gamma \geq 1/4$ and let
  $G_0^{(1)}$ be the self-adjoint operator in $\hilbert_1 \cong
  \ltwo$ that corresponds to the closure $g_0^{(1)}$ of the quadratic form
  \begin{equation*}
    u \mapsto \int_0^\infty \left(
      \left|u''(r)\right|^2
      - \hardyconst{1}{2} \frac{|u(r)|^2}{r^4} 
    \right) \, dr,
  \end{equation*}
  initially defined on $\cz$. 
  Then there is a constant $C^{(1)} = C^{(1)}(\gamma) > 0$
  such that given a non-negative $V\in \czthree$ and $V^{(1)} = P_1 \utrans V
  \utrans^{-1} P_1$, the negative spectrum of $G_0 - V^{(1)}$ is
  discrete and 
  \begin{equation*}
    \tr \left( G_0^{(1)} - V^{(1)} \right)_-^{\gamma}
    \leq C^{(1)} \int_{\rd 3} V(x)^{\gamma + \frac{3}{4}} \, dx.
  \end{equation*}
\end{lem}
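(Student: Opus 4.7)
The plan is to reduce Lemma \ref{lem:3d-firstpart} to the half-line inequality of Proposition \ref{prop:niceV-1d}, applied with $\xp = 2$.

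First, I would identify the operator $V^{(1)}$ on $\hilbert_1 \cong \ltwo$ explicitly. Using $Y_0 \equiv (4\pi)^{-1/2}$ and the definition of $\utrans$, a short computation shows that, for any $v = \{v_0,0,0,\ldots\} \in \hilbert_1$, the zeroth component of $P_1 \utrans V \utrans^{-1} v$ equals $W(r)\,v_0(r)$, where
\begin{equation*}
  W(r) = \frac{1}{4\pi}\int_{\sphere} V(r\theta)\, d\sigma(\theta)
\end{equation*}
is the spherical average of $V$. Thus $V^{(1)}$ is multiplication by $W$, and since $V \in \czthree$, the function $W$ is bounded and has compact support in $(0,\infty)$.

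Second, I would observe that $G_0^{(1)}$ coincides with the Friedrichs extension of $d^4/dr^4 - \hardyconst{1}{2}/r^4$ on $\cz$, since both operators arise from the same quadratic form. Applying Proposition \ref{prop:niceV-1d} with $\xp = 2$, so that the critical exponent $(3-\xp)/4 = 1/4$ matches the hypothesis $\gamma \geq 1/4$, yields both the discreteness of the negative spectrum of $G_0^{(1)} - V^{(1)}$ and the bound
\begin{equation*}
  \tr\left(G_0^{(1)} - V^{(1)}\right)_-^{\gamma}
  \leq C \int_0^\infty W(r)^{\gamma + 3/4}\, r^2 \, dr
\end{equation*}
for some constant $C = C(\gamma) > 0$.

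Finally, I would pass from this one-dimensional integral to a three-dimensional one using Jensen's inequality. Since $\gamma + 3/4 \geq 1$, the map $t \mapsto t^{\gamma + 3/4}$ is convex on $[0,\infty)$, and applying Jensen with respect to the probability measure $(4\pi)^{-1} d\sigma$ on $\sphere$ yields
\begin{equation*}
  W(r)^{\gamma + 3/4} \leq \frac{1}{4\pi}\int_{\sphere} V(r\theta)^{\gamma + 3/4}\, d\sigma(\theta).
\end{equation*}
Multiplying by $r^2$, integrating in $r$ and using polar coordinates yields $\int_0^\infty W(r)^{\gamma+3/4}\, r^2\, dr \leq (4\pi)^{-1}\int_{\rd 3} V(x)^{\gamma+3/4}\, dx$, and the desired inequality follows with $C^{(1)} = C/(4\pi)$. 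There is no serious analytic obstacle here, since the essential work is packaged in Proposition \ref{prop:niceV-1d}; the substantive observation is that the polar Jacobian $r^2$ matches exactly the weight $x^{\xp}$ with $\xp = 2$, which is precisely the reason the weighted half-line inequality in Theorem \ref{thm:maingeneral} was formulated with an arbitrary power weight in the first place.
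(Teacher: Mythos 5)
Your proposal is correct and follows essentially the same route as the paper: identify $V^{(1)}$ as multiplication by the spherical average, invoke Proposition \ref{prop:niceV-1d} with $\xp=2$ so the weight $r^{\xp}$ matches the polar Jacobian, and pass to the three-dimensional integral via convexity of $t\mapsto t^{\gamma+3/4}$ (the paper phrases this last step as H\"older's inequality rather than Jensen's, but it is the same estimate).
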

\begin{proof}
  Use Proposition \ref{prop:niceV-1d} with $\xp = 2$ to obtain a
  constant $C = C(\gamma) > 0$ such that for 
  any non-negative $W \in \cz$, the negative
  spectrum of $G_0^{(1)} - W$ is discrete and
  \begin{equation*}
    \tr\left(G_0^{(1)} - W\right)_-^\gamma \leq
    C \int_0^\infty W(r)^{\gamma + \frac 3 4} \, r^2 \, dr.
  \end{equation*}
  Note that for any non-negative $V \in \czthree$, the operator $V^{(1)}$ is 
  simply multiplication with the function $\tilde V \in \cz$, where
  \begin{equation*}
    \tilde V(r) = \frac{1}{\sigma(\sphere)} 
    \int_{\sphere} V(r\theta) \, d\sigma(\theta),
    \quad r > 0.
  \end{equation*}
  In particular, the negative spectrum of $G_0^{(1)} - V^{(1)}$ is
  discrete and 
  \begin{align*}
    \tr\left(G_0^{(1)} - V^{(1)}\right)_-^\gamma 
    & C \int_0^\infty \tilde V (r)^{\gamma + \frac 3 4} \, r^2 \, dr \\
    & \leq \frac{C}{\sigma(\sphere)} 
    \int_{\rd 3} V(x)^{\gamma + \frac 3 4} \, dx,
  \end{align*}
  since by H\"older's inequality,
  \begin{equation*}
    \tilde V(r)^{\gamma + \frac 3 4} 
    \leq \frac{1}{\sigma(\sphere)}
    \int_{\sphere} V(r\theta)^{\gamma + \frac 3 4} \,
    d\sigma(\theta),
    \quad r > 0. \qedhere
  \end{equation*}
\end{proof}

We will use the following fourth-order Lieb-Thirring inequality in
$\lspace 2 {\rd 3}$, that follows from more general results in
\cite{netrusov-weidl}. The operator $(-\Delta)^2$ is of course defined as
the self-adjoint operator in $\lspace 2 {\rd 3}$ corresponding to the
closure of the quadratic form
\begin{equation*}
  u \mapsto \int_{\rd 3} \left|\Delta u(x)\right|^2 \, dx,
  \quad u \in \czthree.
\end{equation*}
\begin{lem} \label{lem:nohardy}
  For any $\gamma \geq 1/4$, there is a constant $D = D(\gamma) > 0$ such 
  that for any non-negative $V \in \czthree$, 
  the negative spectrum of $(-\Delta)^2 - V$ is discrete and
  \begin{equation*}
    \tr \left((-\Delta)^2 - V\right)_-^{\gamma}
    \leq D \int_{\rd 3} V(x)^{\gamma + \frac 3 4} \, dx.
  \end{equation*}
\end{lem}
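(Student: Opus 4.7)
The plan is simply to apply the main result of Netrusov-Weidl \cite{netrusov-weidl}, which establishes critical-exponent Lieb-Thirring inequalities for operators of the form $(-\Delta)^l - V$ on $\lspace 2 {\rd d}$ in the regime $l > d/2$. In that regime the critical exponent is $\gamma_c = 1 - d/(2l)$. Specialising to $l = 2$ and $d = 3$ gives $l > d/2$ and $\gamma_c = 1/4$, matching the hypothesis $\gamma \geq 1/4$; at the critical value this produces the bound
\begin{equation*}
  \tr \bigl((-\Delta)^2 - V\bigr)_-^{1/4} \leq D \int_{\rd 3} V(x) \, dx,
\end{equation*}
which is exactly the stated inequality at $\gamma = 1/4$.

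For $\gamma$ strictly greater than $1/4$, the inequality follows from the critical one by the standard Aizenman-Lieb argument \cite{aizenman-lieb}, in precisely the same way as in the passage from Proposition \ref{prop:niceV-general} to Corollary \ref{cor:niceV-general}. Discreteness of the negative spectrum of $(-\Delta)^2 - V$ is automatic: any $V \in \czthree$ is bounded and compactly supported, hence represents a relatively form-compact (indeed form-bounded with relative bound zero) perturbation of the non-negative self-adjoint operator $(-\Delta)^2$, so that the negative part of the spectrum of $(-\Delta)^2 - V$ consists of isolated eigenvalues of finite multiplicity.

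The main ``obstacle'' here is conceptual rather than technical: the present paper's techniques reduce problems of this type to weighted one-dimensional fourth-order operators via Proposition \ref{prop:niceV-general}, and while the $n = 0$ channel of the spherical-harmonic decomposition \eqref{eq:laplacedecomp} fits directly into that framework with $\xp = 2$ (this is precisely Lemma \ref{lem:3d-firstpart}), the higher channels $n \geq 1$ introduce centrifugal terms $n(n+1)/r^2$ that do not match the weighted form allowed by the proposition. Reproving the lemma from scratch along the lines of the rest of the paper would therefore require a separate one-dimensional fourth-order Lieb-Thirring inequality incorporating a centrifugal term, together with a careful summation in $n$ over all angular-momentum channels --- exactly the sort of extension described as ``somewhat tedious'' in the introduction, and the reason for quoting \cite{netrusov-weidl} instead.
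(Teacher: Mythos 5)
Your proposal is correct and matches the paper, which likewise gives no independent proof of this lemma but simply invokes the general results of \cite{netrusov-weidl} for $(-\Delta)^l - V$ with $l > d/2$, specialised to $l=2$, $d=3$, critical exponent $1 - d/(2l) = 1/4$. The extra remarks on Aizenman--Lieb monotonicity and on discreteness of the negative spectrum are consistent with the paper's framework and require no further justification.
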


\begin{lem} \label{lem:3d-secondpart}
  Let $\gamma \geq 1/4$ and denote by
  $G_0^{(2)}$ the self-adjoint operator in $\hilbert_2$
  corresponding to the closure $g_0^{(2)}$ of the quadratic form
  \begin{equation*}
    \{u_n\}_{n=1}^\infty \mapsto 
    \sum_{n=1}^\infty \int_0^\infty
    \left|-u_n''(r) + n(n+1)\frac{u_n(r)}{r^2} \right|^2 \, dr,
  \end{equation*}
  initially defined on $D \coloneq P_2 \utrans \czthree$. Then there
  is a constant $C^{(2)} > 0$ such given a non-negative $V \in \czthree$ and 
  $V^{(2)} = P_2 \utrans V \utrans^{-1} P_2$, the
  negative spectrum of $G_0^{(2)} - V^{(2)}$ is discrete and 
  \begin{equation*}
    \tr\left( G_0^{(2)} - V^{(2)}\right)_-^{\gamma}
    \leq C^{(2)} \!\! \int_{\rd 3} V(x)^{\gamma + \frac 3 4} \, dx.
  \end{equation*}
\end{lem}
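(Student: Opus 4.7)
My plan is to view $G_0^{(2)} - V^{(2)}$, via the unitary $\utrans$, as the orthogonal compression of $(-\Delta)^2 - V$ to the invariant subspace $\utrans^{-1}\hilbert_2 \subset \lspace 2 {\rd 3}$, and then apply Lemma~\ref{lem:nohardy}. The key point is that $(-\Delta)^2$ is block diagonal in the spherical-harmonic decomposition: iterating \eqref{eq:laplacedecomp} shows that $\hilbert_2$ is invariant under $\utrans(-\Delta)^2\utrans^{-1}$ and that the restricted form is exactly $g_0^{(2)}$. Hence, for every $u \in \qdom{g_0^{(2)}}$,
\begin{equation*}
  g_0^{(2)}[u] - \langle V^{(2)}u, u\rangle
  = \int_{\rd 3} \left(\left|\Delta\utrans^{-1}u(x)\right|^2 - V(x)\left|\utrans^{-1}u(x)\right|^2\right) dx.
\end{equation*}

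I would then run Glazman's lemma (Remark~\ref{rem:glazman}) on both sides. For each $\lambda > 0$, the number $N_-(G_0^{(2)} - V^{(2)} + \lambda)$ of eigenvalues below $-\lambda$ equals the supremum of $\dim F$ over linear subspaces $F \subset \qdom{g_0^{(2)}}$ on which the left-hand side above is strictly less than $-\lambda\|u\|^2$ for every nonzero $u \in F$. Pulling back by $\utrans^{-1}$ sends each such $F$ to an admissible subspace in the form domain of $(-\Delta)^2 - V$, so the supremum for $G_0^{(2)} - V^{(2)}$ is dominated by its analogue over the larger form domain, giving the pointwise comparison
\begin{equation*}
  N_-(G_0^{(2)} - V^{(2)} + \lambda) \leq N_-((-\Delta)^2 - V + \lambda), \quad \lambda > 0.
\end{equation*}
Lemma~\ref{lem:nohardy} guarantees the right-hand side is finite and vanishes as $\lambda \downarrow 0$, which transfers discreteness of the negative spectrum to $G_0^{(2)} - V^{(2)}$. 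Multiplying by $\gamma\lambda^{\gamma-1}$, integrating over $\lambda \in (0,\infty)$, and using the layer-cake identity $\tr A_-^\gamma = \gamma\int_0^\infty \lambda^{\gamma-1} N_-(A + \lambda)\,d\lambda$ then yields
\begin{equation*}
  \tr(G_0^{(2)} - V^{(2)})_-^\gamma \leq \tr((-\Delta)^2 - V)_-^\gamma \leq D\int_{\rd 3} V(x)^{\gamma + \frac{3}{4}} \, dx,
\end{equation*}
with the second inequality supplied by Lemma~\ref{lem:nohardy}.

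The main obstacle, or really the only verification requiring any care, is the identification in the first paragraph of $G_0^{(2)}$ as the compression of $(-\Delta)^2$ to $\utrans^{-1}\hilbert_2$ at the level of quadratic forms. Since subtracting the spherical mean of a function in $\czthree$ leaves one in $\czthree$, one has $\utrans^{-1}P_2\utrans\czthree \subset \czthree$, and the identification then reduces to verifying that these zero-spherical-mean functions form a core for the form of $(-\Delta)^2$ restricted to $\utrans^{-1}\hilbert_2$. This is standard, given that $P_2$ commutes with $\utrans(-\Delta)^2\utrans^{-1}$ and hence preserves its form domain.
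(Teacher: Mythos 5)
Your proposal is correct and follows essentially the same route as the paper: the paper likewise transfers the problem via $\utrans$ to the operator $\hat G_0^{(2)}-\hat V^{(2)}=\utrans\bigl((-\Delta)^2-V\bigr)\utrans^{-1}$ on all of $\hilbert$, compares the counting functions $\rank E(-\infty,-\lambda)\leq\rank\hat E(-\infty,-\lambda)$ by Glazman's lemma, and concludes with Lemma~\ref{lem:nohardy}. Your closing verification that $\utrans^{-1}P_2\utrans\czthree\subset\czthree$ is a point the paper leaves implicit, so your write-up is if anything slightly more careful.
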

\begin{proof}
  Consider the operator $(-\Delta)^2$ in $\lspace 2 {\rd 3}$. 
  Let the constant $D > 0$ be as in Lemma \ref{lem:nohardy}.
  By \eqref{eq:laplacedecomp}, the operator 
  $\hat G_0^{(2)} \coloneq \utrans (-\Delta)^2 \utrans^{-1}$ 
  corresponds to the closure in $\hilbert$ of the quadratic form
  \begin{equation*}
    \{u_n\}_{n=0}^\infty \mapsto 
    \sum_{n=0}^\infty \int_0^\infty
    \left|-u_n''(r) + n(n+1)\frac{u_n(r)}{r^2} \right|^2 \, dr,    
  \end{equation*}
  initially defined on $\hat D \coloneq \utrans \czthree$. 

  Choose $V \in \czthree$ and let $V^{(2)} = P_2 \utrans V \utrans^{-1}
  P_2$ and $\hat V^{(2)} = \utrans V
  \utrans^{-1}$. Denote by $E$ and $\hat E$ the spectral measures 
  corresponding to the operators $G_0^{(2)} - V^{(2)}$ and 
  $\hat G_0^{(2)} - \hat V^{(2)}$, respectively. By Glazman's lemma,
  \begin{equation} \label{eq:spectralrankleq}
    \rank E(-\infty,-\lambda) \leq \rank \hat E(-\infty, -\lambda),
  \end{equation}
  for any $\lambda > 0$. Lemma \ref{lem:nohardy} shows that the
  negative spectrum of $\hat G_0^{(2)} - \hat V^{(2)}$ is discrete and
  that
  \begin{equation*}
    \tr\left( \hat G_0^{(2)} - \hat V^{(2)}\right)_-^\gamma
    \leq D \int_{\rd 3} V(x)^{\gamma + \frac 3 4} \, dx.
  \end{equation*}
  The result follows from this and \eqref{eq:spectralrankleq}.
\end{proof}

\begin{prop} \label{prop:niceV-3d}
  Define the quadratic form $h_0$ as the closure of
  \begin{equation*}
    u \mapsto
    \int_{\rd 3}
    \left(
      \left|\Delta u(x)\right|^2
      - \hardyconst{3}{2} \frac{|u(x)|^2}{|x|^4}
    \right) \, dx
  \end{equation*}
  on $\czthree$.
  Let $H_0$ be the self-adjoint operator in $\lspace 2 {\rd 3}$
  corresponding to $h_0$.
  Then there is a constant $C > 0$  such that 
  for any non-negative $V \in \czthree$, the negative spectrum of 
  $H_0-V$ is discrete and
  \begin{equation*}
    \tr (H_0 - V)_-^{1/4} \leq C \int_{\rd 3} V(x) \, dx.
  \end{equation*}
\end{prop}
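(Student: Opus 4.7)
The plan is to combine Lemma \ref{lem:3d-firstpart} and Lemma \ref{lem:3d-secondpart} via the spherical-harmonic decomposition $\utrans$ together with the orthogonal splitting $\hilbert = \hilbert_1 \oplus \hilbert_2$. First I would rewrite $\utrans h_0 \utrans^{-1}$ using formula \eqref{eq:laplacedecomp}: on $\{u_n\}_{n \geq 0} \in \utrans \czthree$, the form evaluates to
\begin{equation*}
  \sum_{n=0}^{\infty} \int_0^{\infty} \left( \left| -u_n''(r) + n(n+1) \frac{u_n(r)}{r^2} \right|^2 - \hardyconst{1}{2} \frac{|u_n(r)|^2}{r^4} \right) dr.
\end{equation*}
The $n = 0$ summand is precisely $g_0^{(1)}[u_0]$. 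For $n \geq 1$, Lemma \ref{lem:orthhardy} applied with $c = n(n+1) \geq 2$ yields the constant $K_n := c^2 - 3c/2 + \hardyconst{1}{2} \geq 25/16$, which lets one absorb the Hardy term: the $n$-th summand is bounded below by $(1 - 9/(16 K_n)) \geq 16/25$ times $\int_0^\infty |{-u_n''} + n(n+1) u_n / r^2|^2 \, dr$. Consequently $\utrans H_0 \utrans^{-1}$ decomposes as $G_0^{(1)} \oplus \hat H_0^{(2)}$ with $\hat H_0^{(2)} \geq \tfrac{16}{25} G_0^{(2)}$ as quadratic forms.

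Next I would decouple the potential across the two subspaces. Setting $\hat V = \utrans V \utrans^{-1} \geq 0$ and $V^{(j)} = P_j \hat V P_j$, a Cauchy--Schwarz argument (using positivity of $(P_1 \hat V^{1/2} \mp P_2 \hat V^{1/2})^*(P_1 \hat V^{1/2} \mp P_2 \hat V^{1/2})$) gives $\hat V \leq 2 V^{(1)} + 2 V^{(2)}$. Combining with the previous step,
\begin{equation*}
  \utrans (H_0 - V) \utrans^{-1} \geq \bigl( G_0^{(1)} - 2 V^{(1)} \bigr) \oplus \Bigl( \tfrac{16}{25} G_0^{(2)} - 2 V^{(2)} \Bigr),
\end{equation*}
and the right-hand side is block diagonal with respect to $\hilbert = \hilbert_1 \oplus \hilbert_2$. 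By Glazman's lemma (Remark \ref{rem:glazman}), this operator ordering passes to the ordering of $\tr(\cdot)_-^{1/4}$.

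Finally I would add the blockwise bounds. Splitting the trace of a direct sum,
\begin{equation*}
  \tr (H_0 - V)_-^{1/4} \leq \tr \bigl( G_0^{(1)} - 2 V^{(1)} \bigr)_-^{1/4} + \Bigl( \tfrac{16}{25} \Bigr)^{1/4} \tr \Bigl( G_0^{(2)} - \tfrac{25}{8} V^{(2)} \Bigr)_-^{1/4}.
\end{equation*}
Lemma \ref{lem:3d-firstpart} applied with the potential $2V$ bounds the first term by $2 C^{(1)} \int_{\rd 3} V(x) \, dx$; Lemma \ref{lem:3d-secondpart} applied with the potential $\tfrac{25}{8} V$ bounds the second by $(16/25)^{1/4} (25/8) C^{(2)} \int_{\rd 3} V(x) \, dx$. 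Summing gives the desired inequality.

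The step most likely to need care is the uniform absorption of the Hardy weight on $\hilbert_2$: one needs $K_n - \hardyconst{1}{2}$ to stay bounded away from $0$ uniformly in $n \geq 1$. Since $c = n(n+1) \geq 2$ forces $c^2 - 3c/2 \geq 1$, Lemma \ref{lem:orthhardy} supplies exactly this gap, reflecting that the critical Hardy singularity is felt only by the radial ($n = 0$) mode.
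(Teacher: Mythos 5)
Your proposal is correct and follows essentially the same route as the paper: spherical-harmonic decomposition via $\utrans$, Lemma \ref{lem:orthhardy} with $c = n(n+1) \geq 2$ to absorb the Hardy weight on $\hilbert_2$ (your factor $16/25$ is exactly the paper's $\epsilon = 1/(1+\hardyconst{1}{2})$), the Cauchy--Schwarz decoupling $W \leq 2V^{(1)} + 2V^{(2)}$, and the blockwise application of Lemmas \ref{lem:3d-firstpart} and \ref{lem:3d-secondpart}. No gaps.
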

\begin{proof}
  Let $G_0^{(1)}$, $G_0^{(2)}$, $g_0^{(1)}$, $g_0^{(2)}$, $C^{(1)}$
  and $C^{(2)}$ be as in Lemmas \ref{lem:3d-firstpart} and
  \ref{lem:3d-secondpart}.
  Choose any $u \in \czthree$ and $\epsilon$ with $0 < \epsilon <
  1$. Write $\{u_n\}_{n=0}^\infty = \utrans u$ and note that each
  $u_n \in \cz$. 
  Using \eqref{eq:laplacedecomp} and $\hardyconst{3}{2} = \hardyconst{1}{2}$,
  and finally Lemma \ref{lem:orthhardy}
  it follows that
  \begin{align*}
    h_0[u]
    = & \sum_{n=0}^\infty \int_0^\infty \left(
      \left|-u_n''(r) + n(n+1)\frac{u_n(r)}{r^2} \right|^2
      - \hardyconst{1}{2} \frac{|u_n(r)|^2}{r^4} 
    \right) \, dr \\
    \geq & \int_0^\infty \left(
      \left|u_0''(r)\right|^2
      - \hardyconst{1}{2} \frac{|u_0(r)|^2}{r^4} 
    \right) \, dr \\
    & +  \epsilon \sum_{n=1}^\infty \int_0^\infty
      \left|-u_n''(r) + n(n+1)\frac{u_n(r)}{r^2} \right|^2 \, dr \\
    & +  
    \sum_{n=1}^\infty \int_0^\infty \left(
      (1-\epsilon)
      \left(1 + \hardyconst{1}{2}\right)
      - \hardyconst{1}{2}
    \right) \frac{|u_n(r)|^2}{r^4} \, dr.
  \end{align*}
  Fixing $\epsilon = 1/(1+\hardyconst{1}{2})$ we get that
  \begin{equation*}
    (1-\epsilon)(1+\hardyconst{1}{2}) - \hardyconst{1}{2} = 0
  \end{equation*}
  and thus
  \begin{equation*}
    h_0[u] \geq g_0^{(1)}\left[P_1 \utrans u\right] + 
    \epsilon g_0^{(2)}\left[P_2 \utrans u\right], 
  \end{equation*}
  for any $u \in \czthree$.
  Since $\czthree$ is initial domain of $h_0$ and 
  since the initial domains of $g_0^{(1)}$ and $g_0^{(2)}$ are
  $P_1 \utrans \czthree$ and $P_2 \utrans \czthree$, respectively, it
  follows that
  \begin{equation*}
    \utrans H_0 \utrans^{-1} \geq G_0^{(1)} \oplus \epsilon G_0^{(2)}.
  \end{equation*}

  Choose $V \in \czthree$ and let $W = \utrans V \utrans^{-1}$,
  $V^{(1)} = P_1WP_1$ and $V^{(2)} = P_2WP_2$.
  Since $W$ is bounded and non-negative, it follows for
  any $f \in \hilbert$ that
  \begin{equation*}
    2\realpart (P_1WP_2f,f) 
    \leq 
    2\left\|W^{1/2}P_2f\right\| \left\|W^{1/2}P_1f\right\|
    \leq \left(P_1WP_1f,f\right) 
    + \left(P_2WP_2f,f\right).
  \end{equation*}
  Hence
  \begin{equation*}
    P_1WP_2 + P_2WP_1 \leq V^{(1)} + V^{(2)},
  \end{equation*}
  and therefore
  \begin{align*}
    \utrans H_0 \utrans^{-1} - W 
    & \geq \left(G_0^{(1)} \oplus \epsilon \, G_0^{(2)}\right) - W \\
    & \geq \left(G_0^{(1)}-2 V^{(1)}\right)
    \oplus \epsilon \left(G_0^{(2)} - \frac 2 \epsilon V^{(2)}\right).
  \end{align*}
  The result now follows from Lemmas \ref{lem:3d-firstpart} and
  \ref{lem:3d-secondpart}.
\end{proof}

\section{Proof of the Main Results} \label{sec:mainproof}

We are now in a position to prove Theorems 
\ref{thm:main1d}, \ref{thm:main3d} and \ref{thm:maingeneral}. 
This will be accomplished by approximating general potentials with
smooth, compactly supported ones, as in the abstract lemma below, 
and then combine this 
with Corollary \ref{cor:niceV-general} 
and Propositions \ref{prop:niceV-1d} and \ref{prop:niceV-3d}.

\begin{lem}
  Let $\Omega$ be an open subset of $\rd d$. Suppose that $H_0$ is a
  non-negative symmetric operator in $\lspace 2 \Omega$, 
  defined on $\czeroinf \Omega$. Let
  $\hat{H}_0$ be the Friedrich extension of $H_0$, and choose
  $\gamma_1 \geq 0$, $\gamma_2 \geq 1$ and $\xp \geq 0$. Suppose that
  there is a constant $C > 0$, such that for any non-negative
  $V \in \czeroinf{\Omega}$, the negative spectrum of $\hat H_0 - V$ is
  discrete, and
  \begin{equation*}
    \tr(\hat H_0 - V)_-^{\gamma_1} \leq C \int_\Omega V(x)^{\gamma_2}
    |x|^{\xp} \, dx.
  \end{equation*}
  Then, for any non-negative potential $\hat V$ such that
  $\hat V(x)^{\gamma_2}x^{\xp}$ is integrable on $\Omega$, the
  quadratic form
  \begin{equation*}
    h_{\hat V}[u] = (H_0 u, u)
    - \int_\Omega \hat V(x) |u(x)|^2 \, dx,
    \quad u \in \czeroinf \Omega
  \end{equation*}
  is semi-bounded. Furthermore, if we let $\hat H_0 - \hat V$ be the
  self-adjoint operator associated with the closure of the above form,
  then the negative spectrum of $\hat H_0 - \hat V$ is discrete and
  \begin{equation*}
    \tr \left( \hat H_0 - \hat V \right)_-^{\gamma_1}
    \leq \int_\Omega \hat V(x)^{\gamma_2} |x|^{\xp} \, dx.
  \end{equation*}
\end{lem}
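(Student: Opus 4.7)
The idea is to approximate $\hat V$ from below by a monotone increasing sequence $V_n \in \czeroinf{\Omega}$, apply the hypothesis to each $V_n$ to obtain a uniform spectral bound, and pass to the limit via monotone convergence of quadratic forms. To construct $V_n$, fix an exhaustion $\Omega_n \Subset \Omega$, set $\hat V_n = \min(\hat V, n)\chi_{\Omega_n}$ so that $0 \leq \hat V_n \nearrow \hat V$ a.e., and regularize by mollification with sufficiently small parameters, passing to a subsequence if needed to restore monotonicity. This produces $V_n \in \czeroinf{\Omega}$ with $0 \leq V_n \leq V_{n+1}$ and $V_n \to \hat V$ a.e.\ in $\Omega$.

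Applying the hypothesis to each $V_n$ gives
\[
\tr(\hat H_0 - V_n)_-^{\gamma_1}
\leq C \int_\Omega V_n(x)^{\gamma_2} |x|^{\xp} \, dx
\leq C \int_\Omega \hat V(x)^{\gamma_2} |x|^{\xp} \, dx
=: M < \infty,
\]
so in particular $\hat H_0 - V_n \geq -M^{1/\gamma_1}$ uniformly in $n$. For any $u \in \czeroinf{\Omega}$, monotone convergence yields $\int V_n |u|^2 \nearrow \int \hat V |u|^2$, so letting $n \to \infty$ in the inequality $(H_0 u, u) - \int V_n |u|^2 \, dx \geq -M^{1/\gamma_1}\|u\|^2$ shows that
\[
h_{\hat V}[u] = (H_0 u, u) - \int_\Omega \hat V(x) |u(x)|^2 \, dx \geq -M^{1/\gamma_1} \|u\|^2
\]
on $\czeroinf{\Omega}$. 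Hence $h_{\hat V}$ is semi-bounded and therefore closable, and its closure defines the self-adjoint operator $\hat H_0 - \hat V$.

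To pass the trace inequality to the limit, note that on $\czeroinf{\Omega}$ the closed forms $h_{V_n}$ decrease monotonically to $h_{\hat V}$ (since $V_n \nearrow \hat V$). By the monotone convergence theorem for closed quadratic forms, the operators $\hat H_0 - V_n$ converge to $\hat H_0 - \hat V$ in strong resolvent sense; the monotonicity additionally ensures via the min-max principle that each negative eigenvalue $\lambda_k(\hat H_0 - V_n)$ decreases monotonically to $\lambda_k(\hat H_0 - \hat V)$, with all such eigenvalues isolated of finite multiplicity thanks to the uniform trace bound. In particular, the negative spectrum of $\hat H_0 - \hat V$ is discrete. Applying monotone convergence to $\tr(\cdot)_-^{\gamma_1} = \sum_k |\lambda_k|^{\gamma_1}$ on the left and to $V_n^{\gamma_2}|x|^{\xp} \nearrow \hat V^{\gamma_2} |x|^{\xp}$ on the right then yields
\[
\tr(\hat H_0 - \hat V)_-^{\gamma_1}
= \sup_n \tr(\hat H_0 - V_n)_-^{\gamma_1}
\leq C \sup_n \int_\Omega V_n^{\gamma_2} |x|^{\xp} \, dx
= C \int_\Omega \hat V^{\gamma_2} |x|^{\xp} \, dx,
\]
which is the desired estimate.

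The main obstacle is the technical construction of a monotonically increasing sequence in $\czeroinf{\Omega}$ converging a.e.\ to $\hat V$: the interplay of support truncation and mollification can destroy monotonicity, so a careful choice of the mollification scale and possibly a diagonal subsequence is required. Everything else is a direct application of the hypothesis combined with standard monotone convergence arguments for forms and for $L^1$-integrals.
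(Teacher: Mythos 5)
Your overall strategy coincides with the paper's: approximate $\hat V$ from below by smooth, compactly supported $V_n$, apply the hypothesis to get the uniform bound $\tr(\hat H_0 - V_n)_-^{\gamma_1} \leq CD$ with $D = \int_\Omega \hat V^{\gamma_2}|x|^{\xp}\,dx$, deduce semi-boundedness of $h_{\hat V}$ by monotone convergence, and pass the trace bound to the limit. The construction of the $V_n$ and the semi-boundedness step are fine (the paper simply posits such a sequence). The gap is in the limiting step. Writing $\lambda_{k,n}$ and $\lambda_k$ for the ordered negative eigenvalues of $\hat H_0 - V_n$ and of $\hat H_0 - \hat V$, and $N(\lambda,n)$, $N(\lambda)$ for the corresponding counting functions, what the trace bound actually requires is $\lambda_k \geq \lim_n \lambda_{k,n}$, equivalently $N(\lambda) \leq \lim_n N(\lambda,n)$: no extra or deeper negative eigenvalues may appear in the limit. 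Neither tool you invoke delivers this. Min-max monotonicity (from $h_{\hat V} \leq h_{V_n}$) gives exactly the opposite inequality, $\lambda_k \leq \lambda_{k,n}$, which only yields $\tr(\hat H_0 - \hat V)_-^{\gamma_1} \geq \sup_n \tr(\hat H_0 - V_n)_-^{\gamma_1}$ -- the useless direction. And strong resolvent convergence (which is what the monotone convergence theorem for a decreasing sequence of forms provides) gives only lower semicontinuity of the spectrum as a set; it controls neither multiplicities nor counting functions, so it cannot by itself exclude $N(\lambda) > \lim_n N(\lambda,n)$.

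The missing argument is precisely the technical heart of the paper's proof. One assumes $N(\lambda) > N \coloneq \lim_n N(\lambda,n)$, picks $\delta > 0$ with $N(\lambda+\delta) > N$, and uses Glazman's lemma to find an $(N+1)$-dimensional subspace $F \subset \czeroinf{\Omega}$ on which $h_{\hat V}[f] < -(\lambda+\delta)\|f\|^2$. Choosing $n$ so large that $\int_\Omega(\hat V - V_n)|f_k|^2\,dx$ is small for an orthonormal basis $f_1,\dots,f_{N+1}$ of $F$, a Cauchy-Schwarz estimate on the cross terms shows $h_{V_n}[f] < -\lambda\|f\|^2$ on all of $F$, contradicting $N(\lambda,n) \leq N$. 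In other words, pointwise convergence of the forms on the core must be upgraded to uniform convergence on the unit sphere of a finite-dimensional trial space contained in $\czeroinf{\Omega}$. If you insert this finite-dimensional approximation step (or cite a version of the monotone convergence theorem that explicitly asserts convergence of the min-max values), your proof closes. A minor side remark: your final inequality correctly carries the constant $C$, which is inadvertently dropped in the statement as printed.
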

\begin{proof}
  Choose a non-negative $\hat V$ such that 
  \begin{equation*}
    D \coloneq \int_\Omega \hat V(x)^{\gamma_2} |x|^{\xp} \, dx < \infty.
  \end{equation*}
  Also choose a sequence $0 \leq V_1 \leq V_2 \leq \cdots
  \leq \hat V$ such that each $V_n$ belongs to $\czeroinf \Omega$
  and such that for almost any $x \in \Omega$, 
  \begin{equation*}
    V_n(x) \to \hat V(x), \quad \text{ as } n \to \infty.
  \end{equation*}
  
  By assumption, we know that the negative spectrum of $\hat H_0 - V_n$
  is discrete and
  \begin{equation} \label{eq:n-trace}
    \tr (\hat H_0 - V_n)_-^{\gamma_1}
    \leq C \int_\Omega V_n(x)^{\gamma_2} |x|^{\xp} \, dx
    \leq CD.
  \end{equation}
  Hence in particular, 
  \begin{equation*}
    \inf \spec{\hat H_0-V_n} \geq -(CD)^{1/\gamma_1}.
  \end{equation*}
  Note that for given $u \in \czeroinf \Omega$,
  monotone convergence shows that
  \begin{equation*}
    h_{\hat V}[u] = \lim_{n\to\infty} \left( (H_0u,u)
      - \int_\Omega \!\! V_n |u|^2\, dx \right)
  \end{equation*}
  and therefore $h_{\hat V}$ is lower semi-bounded by
  $-(CD)^{1/\gamma_1}$ on $\czeroinf \Omega$. 
  Recall that the operator $\hat H_0 - \hat V$ is
  defined as the self-adjoint operator associated with the closure of 
  $h_{\hat V}$. 

  Let $E$ and $E_n$ be the spectral measures corresponding to 
  $\hat H_0-\hat V$ and $\hat H_0 - V_n$, respectively. For $\lambda > 0$, let
  \begin{equation*}
    N(\lambda) = \rank E(-\infty, -\lambda)
    \quad \text{ and } \quad
    N(\lambda,n) = \rank E_n(-\infty, -\lambda).
  \end{equation*}
  For fixed $\lambda > 0$, Glazman's lemma shows that
  \begin{equation*}
    N(\lambda,1) \leq N(\lambda,2) \leq \cdots \leq N(\lambda),
  \end{equation*}
  and since
  \begin{equation*}
    N(\lambda,n) \cdot \lambda^{\gamma_1} 
    \leq \tr (\hat H_0-V_n)_-^{\gamma_1} \leq CD,
  \end{equation*}
  it must be that 
  \begin{equation*}
    N(\lambda,\infty) \coloneq \lim_{n\to\infty} N(\lambda,n) < \infty.
  \end{equation*}
  Since $N(\lambda,n)$ only assumes integer values, it follows that there is
  an integer $m = m(\lambda) \geq 1$ such that
  \begin{equation*}
    N(\lambda,\infty) = N(\lambda,n),
    \quad n \geq m.
  \end{equation*}
  Let us prove that for any $\lambda > 0$,
  \begin{equation} \label{eq:Neq}
    N(\lambda) = N(\lambda,\infty).
  \end{equation}

  Fix $\lambda > 0$ and let $N = N(\lambda,\infty)$. 
  Assume that $N(\lambda) > N$. Then
  in fact it is possible to find a $\delta > 0$ such that 
  $N(\lambda + \delta) > N$.
  Again using Glazman's
  lemma we find a linear set $F \subset \czeroinf{\Omega}$ with 
  $\dim F = N + 1$ such that
  \begin{equation} \label{eq:hV-ineq}
    h_{\hat V}[f] < -(\lambda+\delta)\|f\|^2
  \end{equation}
  for any $f \in F$ with $f \neq 0$. Let $\{f_1,f_2,\ldots,f_{N+1}\}$
  be an orthonormal basis in $F$. 
  Use monotone convergence to fix $n \geq 1$ such that
  \begin{equation} \label{eq:Vn-approx}
    (N+1) \int_\Omega (\hat V-V_n) |f_k|^2 \, dx < \delta,
    \quad k = 1,2,\ldots,N+1.
  \end{equation}
  Now, since $\dim F > N(\lambda,n)$ there must be scalars
  $c_1,c_2,\ldots,c_{N+1}$, not all zero, such that 
  $g \coloneq c_1f_1 + c_2f_2 + \cdots + c_{N+1}f_{N+1}$ satisfies
  \begin{equation*}
    (H_0 g,g) - \int_\Omega \!\! V_n|g|^2 \, dx \geq -\lambda\|g\|^2.
  \end{equation*}
  Note that $|c_1|^2 + |c_2|^2 + \cdots +
  |c_{N+1}|^2 = \|g\|^2$.
  Now, by \eqref{eq:Vn-approx}
  \begin{align*}
    h_{\hat V}[g] & = (H_0 g, g) - \int_\Omega \!\! V_n|g|^2 \, dx 
    - \int_\Omega (\hat V - V_n)|g|^2 \\
    & \geq -\lambda \|g\|^2 - (N+1)\sum_{k=1}^{N+1} |c_k|^2 
    \int_\Omega (\hat V - V_n) |f_k|^2 \, dx \\
    & \geq - (\lambda  + \delta) \|g\|^2.
  \end{align*}
  This contradicts \eqref{eq:hV-ineq}, and therefore \eqref{eq:Neq}
  holds.  

  By \eqref{eq:Neq}, the negative spectrum of $\hat H - \hat V$ is
  discrete. Denote by $\lambda_j$ and $\lambda_{j,n}$, 
  where $j = 1,2,3,\ldots$,
  the negative eigenvalues of $\hat H_0 - \hat V$ and $\hat H_0 -
  V_n$, respectively, ordered such that
  \begin{equation*}
    \lambda_1 \leq \lambda_2 \leq \cdots
    \quad \text{ and } \quad
    \lambda_{1,n} \leq \lambda_{2,n} \leq \cdots.
  \end{equation*}
  For convenience, we always consider infinite sequences $\lambda_j$
  and $\lambda_{j,n}$. If there should only be finitely many, say $k$,
  negative eigenvalues for the operator $\hat H_0 - \hat V$, or $\hat H_0 -
  V_n$, we let $\lambda_j = 0$, or $\lambda_{j,n} = 0$, for $j > k$.
  Note that 
  \begin{equation*}
    \lambda_{j,1} \geq \lambda_{j,2} \geq \cdots \geq \lambda_j.
  \end{equation*}
  Using \eqref{eq:Neq} again, we see that $\lambda_{j,n} \to \lambda_j$ as 
  $n \to \infty$, and the result now follows from \eqref{eq:n-trace}.
\end{proof}

\noindent \textbf{Acknowledgments}. The authors would like to 
thank Ari Laptev and Imperial College in London for their 
hospitality during the final stages of
writing this paper. Both authors were
partially supported by the ESF program SPECT. Andreas Enblom was also 
supported by grant KAW 2005.0098 from the Knut and Alice Wallenberg
Foundation.


\bibliography{fourth}
\bibliographystyle{alpha}

\end{document}